\newtheorem{theorem}{Theorem}
\newtheorem{lemma}{Lemma}
\theoremstyle{proposition}
\newtheorem{proposition}{Proposition}
\theoremstyle{definition}
\theoremstyle{corollary}
\newtheorem{cor}{Corollary}
\theoremstyle{remark}
\theoremstyle{remark}
\newtheorem{remark}[theorem]{Remark}
\newcommand{\beq}{\begin{equation}}
\newcommand{\eeq}{\end{equation}}
\newcommand{\beqn}{\begin{equation*}}
\newcommand{\eeqn}{\end{equation*}}
\newcommand{\bea}{\begin{eqnarray}}
\newcommand{\eea}{\end{eqnarray}}
\newcommand{\bean}{\begin{eqnarray*}}
\newcommand{\eean}{\end{eqnarray*}}
\newcommand{\be}{\begin{enumerate}}
\newcommand{\ee}{\end{enumerate}}
\newcommand{\bi}{\begin{itemize}}
\newcommand{\ei}{\end{itemize}}
\newcommand{\bd}{\begin{description}}
\newcommand{\ed}{\end{description}}
\title{The Fourier coefficients of the critical holomorphic multiplicative chaos}
 \author{Christopher Atherfold and Joseph Najnudel}
\begin{document}

\maketitle
\begin{abstract}
The holomorphic multiplicative chaos (HMC) is a holomorphic
analogue of the Gaussian multiplicative chaos. It arises naturally as the
limit in large matrix size of the characteristic polynomial of Haar unitary
matrices, and more generally, random matrices following the Circular-$\beta$-Ensemble. In \cite{NPS23}, Najnudel, Paquette and Simm prove that in the $L^2$ phase $\beta > 4$, the appropriately normalized Fourier coefficient of the HMC converges in distribution to the square root of the total mass of the Gaussian multiplicative chaos on the unit circle, multiplied by an independent complex normal random variable. 
This convergence has been extended to the $L^1$ phase by Najnudel, Paquette, Simm and Vu in \cite{NPSV25}. In the present article, we prove 
that this convergence further extends to the critical case $\beta = 2$, which corresponds to the limiting coefficients of the characteristic polynomial of the Circular Unitary Ensemble. We also prove the joint convergence of consecutive Fourier coefficients, and
we derive convergence in distribution of the secular coefficients of the Circular Unitary Ensemble with index growing sufficiently slowly with the dimension. 
\end{abstract}

\section{Introduction}
The holomorphic multiplicative chaos (HMC) is a random distribution on the circle, arising in the theory of random matrices and in analytic number theory, and introduced by Najnudel, Paquette and Simm in \cite{NPS23}. 
The HMC is defined as follows. Let $(\mathcal{N}_k)_{k \geq 1}$ be i.i.d. complex Gaussian random variables, such that 
$$ \mathbb{E}[ \mathcal{N}_k] = 
\mathbb{E}[ \mathcal{N}_k^2] = 0 
 \quad
  \text{and}
  \quad
  \mathbb{E}[|\mathcal{N}_k|^2]= 1. $$
Define $G^{\mathbb{C}}$ be the Gaussian holomorphic function on the open unit disc, given by the power series 
$$ 
  G^{\mathbb{C}}(z) = \sum_{k=1}^\infty \frac{z^k}{\sqrt{k}}\mathcal{N}_k.
$$

For a positive parameter $\theta > 0$, we define
the random distribution $\mathrm{HMC}_\theta$
by its action on trigonometric polynomials on the unit circle,
which extend harmonically to $\mathbb{C}^*$ as 
Laurent polynomials $\phi$: 
$$ 
  ( \mathrm{HMC}_\theta, \phi)
  = 
  \lim_{r \to 1} 
  \frac{1}{2\pi}
  \int_0^{2\pi} 
  e^{\sqrt{\theta} G^{\mathbb{C}}(re^{i\vartheta})}
%  \Phi_\infty^*(re^{i\vartheta}) 
  \overline{\phi(r\vartheta)} d\vartheta.
$$
The random distribution $\mathrm{HMC}_\theta$ is, by definition, the 
\emph{holomorphic multiplicative chaos}
of parameter $\theta$. 
For an integer $n$, we define $c_n$ as the 
 the $n$-th Fourier coefficient of the HMC: 
$$
  c_n =  ( \mathrm{HMC}_\theta, \vartheta \mapsto  e^{i n\vartheta}).
$$
For $n < 0$, $c_n$ is equal to zero, so from now, we always assume $n \geq 0$. 
In this case, the coefficient $c_{n}$ is the 
degree $n$ coefficient of the series $G^{\mathbb{C}}$: 
$$
c_{n} = [z^{n}]\,e^{\sqrt{\theta}G^{\mathbb{C}}(z)} = [z^{n}]\,\mathrm{exp}\left(\sqrt{\theta}\sum_{k=1}^{\infty}\frac{z^{k}}{\sqrt{k}}\,\mathcal{N}_{k}\right),
$$
where $[z^{n}]\,h(z)$ denotes the coefficient of $z^{n}$ in the power series expansion of $h(z)$ around the point $z=0$. 

As a consequence of results by Diaconis and Shahshahani \cite{DS94} in the case $\theta =1$, 
and by Jiang and Matsumoto \cite{JM15} in the general case, the random holomorphic function  
$$z \mapsto \mathrm{exp}\left(\sqrt{\theta}\sum_{k=1}^{\infty}\frac{z^{k}}{\sqrt{k}}\,\mathcal{N}_{k}\right)$$
is the limiting distribution of the characteristic polynomial of the Circular-$\beta$-Ensemble inside the open unit disc, where the inverse temperature parameter $\beta$ is given 
by $$\theta = \frac{2}{\beta}.$$
The case $\theta = 1$, $\beta = 2$ corresponds to the characteristic polynomial of the Circular Unitary Ensemble. 
The family of coefficients $(c_n)_{n \geq 0}$ is the limit in distribution, for the convergence of finite-dimensional marginals, of the family of  coefficients of the characteristic polynomial of Circular-$\beta$-Ensembles when the dimension tends to infinity. These coefficients are also called secular coefficients.  

In the work of Soundararajan--Zaman \cite{SZ22}, they identified that these secular coefficients can also be obtained from the large $q$-limit of the function field analogue to the work of Harper in \cite{H20}, using the convergence in distribution of 
$$\left(\widetilde{X}(k) := \frac{\sqrt{k}}{q^{k/2}}\sum_{\substack{\text{deg}(P)| k \\r=\frac{\text{deg}(P)}{k}}}\frac{(f(P))^r}{r} \right)_{k \geq 1}$$
to independent complex Gaussian variables when $q \rightarrow \infty$, $P$ being a monic irreducible polynomial on the field $\mathbb{F}_q$ with $q$ elements, the variables $f(P)$ being independent, uniform on the complex unit circle. This has been studied in a couple of papers recently, for example in \cite{agg22}, \cite{aggar22} and \cite{hof24}. 

The coefficients $c_n$ were studied in detail in \cite{NPS23}, where a result of convergence in distribution is shown for $0 < \theta < 1/2$,
and extended to $0 < \theta < 1$ in \cite{NPSV25}. In \cite{GW24} and \cite{GW25}, Gorodetsky and Wong establish analogues of this convergence for a class of random multiplicative
functions.
In the present article, we extends the main result of \cite{NPSV25} on the coefficients $c_n$ to the critical case $\theta = 1$, which corresponds to $\beta =2$, i.e. the Circular Unitary Ensemble.
Similar extension to critical setting has been very recently obtained in a remarkable preprint by Gorodetsky and Wong in \cite{GW25+}. While some of the core ideas are shared, these works were developed independently of each other. 
The case $\theta = 1$ is different from the case $0 < \theta < 1$ because of the phenomenon 
of \textit{better than square root cancellation}, which appears both in the present HMC setting and in the setting of analytic number theory. As already 
seen in \cite{NPS23}, the naive bound 
$$\mathbb{E}[|c_n|] \leq (\mathbb{E}[ |c_n|^2])^{1/2},$$
which gives the true order of magnitude 
of the expectation of $|c_n|$ when $0 <\theta < 1$,
overestimates this order of magnitude 
when $\theta = 1$. In this case, the naive bound
is equal to $1$, whereas the exact order of magnitude 
is $(\log n)^{-1/4}$, as proven in \cite{SZ22}.
The better than square root cancellation has been previously proven in the setting of random multiplicative functions by
Harper in \cite{H20}. 

The limiting distribution obtained for the coefficients $c_n$ involves the  
 Gaussian multiplicative chaos (GMC), which in 
 the critical case $\theta = 1$, can be 
 defined as follows:  
\begin{align*}
  \mathrm{GMC}_1(d\vartheta) & = 
  %\lim_{r \to \infty} (1-r^2) \sqrt{\log ((1-r^2)^{-1})} \Phi_\infty^*(r e^{i \vartheta})|^2 \frac{d\vartheta}{2 \pi}
  \lim_{r \in (0,1), r \to 1} (1-r^2)
  \sqrt{\log ((1-r^2)^{-1})} | e^{\sqrt{\theta}G^{\mathbb{C}}(r e^{i\vartheta})}|^2  \frac{d\vartheta}{2 \pi}
\\ &   =\lim_{r \in (0,1), r \to 1} (1-r^2)  \sqrt{\log ((1-r^2)^{-1})} e^{\sqrt{\theta}G(r e^{i\vartheta})} \frac{d\vartheta}{2 \pi} 
  \end{align*}
  where $G(z) := 2 \Re G^{\mathbb{C}} (z)$
  for $|z| < 1$. 
The existence of this limit as a random measure is
a particular case of our Proposition \ref{GMCcritical} and is directly deduced from results by Junnila and Saksman \cite{JS17}. 

Moreover, it is shown by Remy in \cite{R20} that the total mass of this particular random measure is distributed as the inverse of an exponential random variable. 
 
The main goal of the present article is to prove the following result:
\begin{theorem}  \label{main}
 In the critical case $\theta = 1$, we have the convergence in distribution: 
  \begin{equation}
    c_n (\log n)^{1/4} \underset{n \rightarrow 
    \infty}{\longrightarrow} \sqrt{\mathcal{M}_{1}}\mathcal{Z} \label{l1phaselim}
  \end{equation}
  where $\mathcal{Z}$ and $\mathcal{M}_{1}$ are independent, $\mathcal{Z}$ being a
  complex Gaussian variable such that 
  $$\mathbb{E} [ \mathcal{Z} ] = 
  \mathbb{E} [ \mathcal{Z}^2], \; \; 
  \mathbb{E} [ |\mathcal{Z}|^2] = 1,$$
and $\mathcal{M}_{1}$ is the
  total mass of the random measure $ \mathrm{GMC}_1$.
  Moreover, $\mathcal{M}_1$ is the inverse of an exponential variable of mean $\sqrt{\pi}$. Therefore, $\sqrt{\pi} |c_n|^2 (\log n)^{1/2}$ converges to the ratio of two independent standard exponential variables, which has density $x \mapsto (1+x)^{-2}$ with respect to the Lebesgue measure. 
\end{theorem}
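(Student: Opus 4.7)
The starting point would be the Cauchy representation
\[
  c_n = \frac{r^{-n}}{2\pi}\int_0^{2\pi} e^{\sqrt{\theta}\,G^{\mathbb{C}}(r e^{i\vartheta})}\,e^{-in\vartheta}\,d\vartheta, \qquad \theta=1,
\]
valid for any $r\in(0,1)$. Following the strategy of \cite{NPS23,NPSV25}, I would take $r=r_n=1-1/n$, fix a slowly growing cutoff $K=K(n)\to\infty$, split $G^{\mathbb{C}}=G^{\mathbb{C}}_{\leq K}+G^{\mathbb{C}}_{>K}$ into low- and high-frequency parts, and condition on the sigma-algebra generated by $(\mathcal{N}_k)_{k\leq K}$.

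Conditionally on the low frequencies, $c_n$ should be, up to controlled errors, a centered complex Gaussian whose conditional second moment is a double integral with oscillatory phase $e^{-in(\vartheta_1-\vartheta_2)}$ localising it near the diagonal. A Laplace / stationary-phase computation, using that the high-frequency covariance $\sum_{k>K} r_n^{2k}\cos(k(\vartheta_1-\vartheta_2))/k$ concentrates on scales $|\vartheta_1-\vartheta_2|\lesssim 1/n$, should reduce this conditional variance to a constant multiple of the approximate critical GMC mass
\[
  \mathcal{M}_1^{(n)} = (1-r_n^2)\sqrt{\log((1-r_n^2)^{-1})}\,\frac{1}{2\pi}\int_0^{2\pi} e^{\sqrt{\theta}\,G(r_n e^{i\vartheta})}\,d\vartheta.
\]
By Proposition~\ref{GMCcritical} (Junnila--Saksman in the critical case), $\mathcal{M}_1^{(n)}\to \mathcal{M}_1$ in distribution, and the factor $\sqrt{\log((1-r_n^2)^{-1})}\asymp \sqrt{\log n}$ accounts precisely for the normalisation $(\log n)^{1/4}$ on the level of $c_n$. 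The independence of the Gaussian $\mathcal{Z}$ from $\mathcal{M}_1$ would come from the fact that $\mathcal{Z}$ is generated by the very high frequencies (close to $n$), whose effect on the mesoscopic GMC mass is asymptotically negligible. The conditional Gaussian step would be a Berry--Esseen-type estimate: one decomposes the high-frequency integral into dyadic blocks in frequency and applies the CLT to linear combinations of the $\mathcal{N}_k$, with Lindeberg bounds coming from conditional $L^4$ moments of the integrand.

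The main obstacle, and the reason this result does not follow from \cite{NPSV25}, is the phenomenon of \emph{better than square root cancellation} at criticality. One has $\mathbb{E}[|c_n|^2]=1$, whereas $(\log n)^{1/2}|c_n|^2$ must converge to a variable with density $(1+x)^{-2}$, which has infinite mean; hence a naive second moment cannot prove tightness or identify the limit. The remedy, in Harper's style \cite{H20}, is to restrict to a ``ballot'' event on which the partial sums $S_j=\sqrt{\theta}\,\sum_{k=1}^{j}\mathrm{Re}(\mathcal{N}_k)/\sqrt{k}$, $1\leq j\leq \log n$, stay below a prescribed barrier curve such as $\tfrac12\log\log j + O(1)$; such an event has probability of order $(\log n)^{-1/2}$ by ballot-type estimates for random walks, which precisely supplies the missing $(\log n)^{-1/2}$ factor. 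Off this event one shows by truncation that the contribution to $c_n$ is $L^1$-negligible, while on it a tilted (size-biased) second moment restricted to favourable configurations gives the correct asymptotics. Putting these ingredients together with Remy's computation \cite{R20} of the law of $\mathcal{M}_1$ yields the stated convergence; the density $(1+x)^{-2}$ is simply the law of the ratio of two independent standard exponentials, coming from $\mathcal{M}_1^{-1}\sim\mathrm{Exp}(\sqrt{\pi})$ and $|\mathcal{Z}|^2\sim\mathrm{Exp}(1)$ being independent.
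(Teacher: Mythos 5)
Your proposal takes a genuinely different route from the paper, and while the first two paragraphs are in the right spirit, the third paragraph introduces a serious confusion that would make the argument not close.

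The overall shape of your first two paragraphs --- write $c_n$ as a Fourier coefficient of the HMC on a circle of radius $r_n$, condition on some initial block of modes, argue that the conditional law is approximately Gaussian, and identify the conditional variance with an approximation of the critical GMC mass whose Seneta--Heyde normalization $\sqrt{\log((1-r_n^2)^{-1})}$ supplies the $(\log n)^{1/2}$ factor --- is close in spirit to what the paper does (the Parseval computation in Section~\ref{proofconvergencechaospsi} is essentially your stationary-phase step). However, the paper's decomposition is different and, importantly, better adapted to making the CLT work. Rather than conditioning on a slowly growing block of initial modes $(\mathcal{N}_k)_{k\leq K(n)}$, the paper decomposes $c_n$ according to the \emph{largest part} $q$ in the composition expansion \eqref{cnexpansion}: the "good" part is a martingale $\sum_q \mathcal{N}_q q^{-1/2} c_{n-q,\,Kn/L}$ where $q$ ranges over an interval of length comparable to $n$, and $c_{n-q,\,Kn/L}$ is built from modes up to $Kn/L\asymp n$. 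Conditioning is with respect to $\mathcal{F}_q$ for $q\asymp n$, not $q\asymp K(n)$. The martingale CLT then applies, and the conditional quadratic variation is identified with $\mathrm{GMC}_1(p)$ via the family $\mu_{u,r}$ of Proposition~\ref{GMCcritical} (using both the Fourier-truncation and the Poisson-extension regularization simultaneously). This is where your Cauchy/Parseval idea actually appears in the paper, but applied to the small coefficients $c_{s,u}$ rather than to $c_n$ directly.

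The third paragraph is where the proposal goes wrong. If the conditional-variance argument is carried through correctly, the critical Seneta--Heyde factor $\sqrt{V_u(r)/2}\,e^{-V_u(r)/2}$ already provides the entire $(\log n)^{-1/2}$ suppression; no ballot event is needed, and the paper uses none. The ballot/barrier approach you describe is Harper's method for proving \emph{upper and lower bounds} of the correct order for low moments of random multiplicative sums, and is what Soundararajan--Zaman adapt in \cite{SZ22} to prove $\mathbb{E}|c_n| \asymp (\log n)^{-1/4}$. It is a different strategy from the distributional-limit strategy of the first two paragraphs, and grafting the two together does not yield a proof: the ballot restriction is not how one identifies the limit law, and it is not how the $(\log n)^{-1/2}$ appears in the variance. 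Concretely, your claim that "off this event one shows the contribution to $c_n$ is $L^1$-negligible" is not an ingredient of the GMC route at all; tightness comes for free from the convergence of the conditional variance, not from a moment truncation. Moreover, you omit the genuine technical obstruction at $\theta=1$: applying the martingale CLT naively to the filtration generated by the $\mathcal{N}_q$ fails the $L^4$ Lindeberg condition (because $\sum_q q^{-2}\cdot n$ does not go to zero once the $(\log n)$ normalization is included), and one must further refine the filtration by splitting each $\mathcal{N}_q$ into $q$ i.i.d.\ pieces $\mathcal{N}_{q,t}$ as the paper does in Section~\ref{martingaleCLT}. A generic "Berry--Esseen type estimate" does not sidestep this; the refinement of the filtration is essential and specific to the critical case.
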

As mentioned previously, this gives a description of the distribution of the model in \cite{SZ22} as well.

This result has a consequence on the secular coefficients of the characteristic polynomial of the Circular Unitary Ensemble:
\begin{cor}
For $N \geq 1$, let $U_N$ be a Haar distributed 
random matrix on the unitary group $U(N)$. 
For $0 \leq n \leq N$, let $c^{(N)}_n$ be the coefficient in $z^n$ of the characteristic polynomial $\operatorname{det} (I_N - z U_N)$. 
Then for any sequence $(N_n)_{n \geq 1}$ of integers such that $N_n \geq n$ and 
$$\frac{N_n}{ n \sqrt{\log n} \log \log n} 
\underset{n \rightarrow \infty}{\longrightarrow} 
\infty,$$
we have 
$$ c^{(N_n)}_n (\log n)^{1/4} \underset{n \rightarrow 
    \infty}{\longrightarrow} \sqrt{\mathcal{M}_{1}}\mathcal{Z} $$
    in distribution. 
\end{cor}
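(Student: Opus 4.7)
The plan is to deduce this from Theorem \ref{main} by a coupling argument: I would construct the Haar matrix $U_{N_n}$ and the Gaussian sequence $(\mathcal{N}_k)_{k \geq 1}$ defining the HMC on a single probability space, so that $(c_n^{(N_n)} - c_n)(\log n)^{1/4}$ tends to $0$ in probability as $n \to \infty$. Together with Theorem \ref{main}, this yields the claimed limit for $c_n^{(N_n)} (\log n)^{1/4}$ by Slutsky's theorem.

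The starting point is the identity
\begin{equation*}
\det(I_{N_n} - z U_{N_n}) = \exp\!\left(-\sum_{k \geq 1} \frac{\mathrm{Tr}(U_{N_n}^k)}{k}\,z^k\right),
\end{equation*}
from which only the traces with $k \leq n$ affect $c_n^{(N_n)}$. Setting $\xi_k := -\mathrm{Tr}(U_{N_n}^k)/\sqrt{k}$, the coefficient $c_n^{(N_n)} = [z^n]\exp\bigl(\sum_{k=1}^n z^k \xi_k/\sqrt{k}\bigr)$ has the same algebraic form as $c_n = [z^n]\exp\bigl(\sum_{k=1}^n z^k \mathcal{N}_k/\sqrt{k}\bigr)$. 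I would then invoke a quantitative Jiang--Matsumoto-type coupling to realize each $\xi_k$ together with $\mathcal{N}_k$ on the same probability space, with $L^2$ error polynomially small in $N_n$ as long as $k$ is not too close to $N_n$. To pass from closeness of the exponents to closeness of the $n$-th coefficients, I would use a contour integral representation on the circle $|z| = 1 - 1/n$, combined with the elementary inequality $|e^F - e^G| \leq (|e^F| + |e^G|)|F-G|$ and sub-Gaussian tail bounds for $F(z), G(z)$ on this contour.

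The main obstacle will be the sharpness of the last step. On the circle $|z| = 1 - 1/n$, the random exponents $F(z)$ and $G(z)$ have variance of order $\log n$, so $e^F, e^G$ have means of order $n^{1/4}$ with log-normal tails; hence forcing $(c_n^{(N_n)} - c_n)(\log n)^{1/4}$ below any $\varepsilon > 0$ demands very precise control of $\|F - G\|_{L^2}$ over this contour. I would handle this by a dyadic decomposition over $k$: for small $k$, the Jiang--Matsumoto coupling provides super-polynomially small error, while for $k$ close to $n$ one must compare moments directly, using $\mathbb{E}|\mathrm{Tr}(U_N^k)|^2 = k$ for $k \leq N$ and controlling the departure from Gaussianity at scale $k/N$. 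The growth hypothesis $N_n \gg n \sqrt{\log n} \log \log n$ is precisely what makes the aggregated error over these dyadic blocks beat the critical scale $(\log n)^{-1/4}$: the factor $\sqrt{\log n}$ reflects the critical normalization of the exponent, and the $\log \log n$ supplies the usual buffer required in such concentration-plus-comparison arguments.
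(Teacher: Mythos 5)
Your high-level plan is exactly the paper's: couple $c_n$ with $c_n^{(N_n)}$ on the same probability space, show $(c_n^{(N_n)} - c_n)(\log n)^{1/4} \to 0$ in probability, and conclude by Slutsky. That part is right, and you correctly diagnose where the growth hypothesis on $N_n$ must enter (beating the critical normalization $(\log n)^{-1/4}$).

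The difference is that you propose to \emph{rebuild} the coupling from scratch via a quantitative Jiang--Matsumoto coupling of traces, a contour integral on $|z| = 1-1/n$, the pointwise inequality $|e^F - e^G| \leq (|e^F|+|e^G|)|F-G|$, and a dyadic decomposition in $k$. The paper does none of this: it simply invokes Lemma~7.2 of \cite{NPS23}, which already states that one can couple so that
\[
\mathbb{E}\bigl[\,|c_n - c_n^{(N)}|^2\,\bigr] = \mathcal{O}\!\left(\frac{n}{N}\log\frac{N}{n}\right)
\]
for $n \leq N/2$. Once that off-the-shelf $L^2$ estimate is granted, the corollary is elementary: multiply by $(\log n)^{1/2}$, use $N_n/n \geq C \sqrt{\log n}\log\log n$ and the monotonicity of $(\log x)/x$ for $x \geq e$ to bound $(\log n)^{1/2}\,\tfrac{n}{N_n}\log(N_n/n)$ by $\mathcal{O}\bigl((\log C + \tfrac12 \log\log n + \log\log\log n)/(C \log\log n)\bigr)$, let $n\to\infty$ and then $C\to\infty$. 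No contour integrals, no sub-Gaussian tail control, no dyadic blocks are needed.

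As written, your route has a genuine gap: the step passing from closeness of the exponents $F,G$ on the contour to an $L^2$ (or even in-probability) bound on $c_n^{(N_n)} - c_n$ at the $(\log n)^{-1/4}$ scale is only sketched, and you yourself flag it as the main obstacle. The inequality $|e^F-e^G| \leq (|e^F|+|e^G|)|F-G|$ combined with Cauchy--Schwarz and the log-normal second moments of $e^F, e^G$ on that contour does not obviously produce the $\mathcal{O}((n/N)\log(N/n))$ rate, and the dyadic argument is not carried out; this is precisely the nontrivial content that Lemma~7.2 of \cite{NPS23} encapsulates. So while your strategy is pointing in the right direction, as a proof it is incomplete where the paper's is complete, and the efficient move is to cite the existing coupling estimate rather than attempt to reprove it.
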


 \begin{proof}
By \cite{NPS23}, Lemma 7.2., we can couple 
$c_n$ with $c^{(N_n)}_n$ in such a way that 
$$\mathbb{E} [ (\log n)^{1/2}|c_n - c^{(N_n)}_n|^2] 
= \mathcal{O} \left( (\log n)^{1/2}\frac{n}{N_n} \log \left(\frac{N_n}{n} \right) \right)$$
for $n$ large enough, in order to ensure 
$n \leq N_n/2$. For all $C > 1$, there exists $n_C \geq 100$ such that 
for all $n \geq n_C$, 
$$\frac{N_n}{n} \geq C \sqrt{\log n} \log \log n
\geq e,$$
and then, since $(\log x)/x$ is decreasing in $x \geq e$, 
$$\frac{n}{N_n} \log \left(\frac{N_n}{n}\right)
\leq \frac{ \log C + (1/2) \log \log n + \log \log \log n}{C \sqrt{\log n} \log \log n}.$$
Hence,
$$\mathbb{E} [ (\log n)^{1/2}|c_n - c^{(N_n)}_n|^2]
= \mathcal{O} \left(  \frac{ \log C + (1/2) \log \log n + \log \log \log n}{C \log \log n}\right)  $$
Taking the upper limit when $n \rightarrow \infty$ and then letting $C \rightarrow \infty$ gives the convergence 
$$\mathbb{E} [ (\log n)^{1/2} |c_n - c_n^{(N_n)}|^2] \underset{n \rightarrow \infty}{\longrightarrow} 
0.$$
In particular, $(\log n)^{1/4} (c_n - c^{(N_n)}_n)$ converges in probability to zero, which proves the corollary by Slutsky's theorem. 
 \end{proof}

The main theorem can also be extended to finite-dimensional marginals of $(c_{n+k})_{k \in \mathbb{Z}}$. For a given complex polynomial 
$$p : z \mapsto \sum_{j=0}^d a_r z^r,$$ we define 
$$X_n (p) := \sum_{r=0}^d a_r c_{n+r}.$$ 
We then have the following: 
\begin{theorem} \label{mainmultidim}
In the critical case $\theta = 1$, the following convergence in distribution holds:
$$X_n (p) (\log n)^{1/4} \underset{n \rightarrow \infty}{\longrightarrow} \left( \int_0^{2 \pi} |p(e^{-i \vartheta})|^2 \mathrm{GMC}_1 (d \vartheta) \right)^{1/2} \mathcal{Z},$$
where 
$\mathcal{Z}$ is complex Gaussian, independent of $\mathrm{GMC}_1$, and 
 $$\mathbb{E} [ \mathcal{Z} ] = 
  \mathbb{E} [ \mathcal{Z}^2], \; \; 
  \mathbb{E} [ |\mathcal{Z}|^2] = 1.$$

\end{theorem}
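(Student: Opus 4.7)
The plan is to adapt the argument for Theorem \ref{main} directly, exploiting the integral representation
\[
X_n(p) = \lim_{r \to 1^-} \frac{1}{2\pi} \int_0^{2\pi} p(r^{-1} e^{-i\vartheta})\, e^{\sqrt{\theta} G^{\mathbb{C}}(r e^{i\vartheta})}\, r^{-n} e^{-in\vartheta}\, d\vartheta,
\]
which differs from the corresponding expression for $c_n$ only by the angular weight $p(e^{-i\vartheta})$. This already suggests heuristically why the factor $|p(e^{-i\vartheta})|^2$ should appear against $\mathrm{GMC}_1$ in the limiting variance, in place of the total mass $\mathcal{M}_1$ present in Theorem \ref{main}.

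Following the high/low decomposition used for Theorem \ref{main}, I would split $G^{\mathbb{C}} = G^{\mathbb{C}}_{\leq M} + G^{\mathbb{C}}_{>M}$ with $M = M(n)$ growing slowly with $n$, and factor $e^{\sqrt{\theta} G^{\mathbb{C}}} = B(z)\, D(z)$, where $B$ has Taylor coefficients $(b_j)$ and $D$ has coefficients $(d_k)$. Convolving the series gives
\[
X_n(p) = \sum_{j \geq 0} b_j\, E^{(p)}_{n-j}, \qquad E^{(p)}_m := \sum_{r=0}^d a_r\, d_{m+r},
\]
where $(b_j)$ depends only on the low-frequency Gaussians and carries the GMC limit, while $(E^{(p)}_m)$ depends only on the high-frequency Gaussians and provides the conditional Gaussianity.

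After conditioning on $G^{\mathbb{C}}_{\leq M}$ (so that the $b_j$'s are frozen), $X_n(p)$ becomes a linear combination of approximately Gaussian variables. Using the identity $\mathbb{E}[d_m \bar d_{m'}] = \delta_{m,m'} f_m$, where $f_m$ is the $m$-th Taylor coefficient of $\exp(\sum_{k>M} u^k/k)$, one obtains
\[
\mathbb{E}\big[E^{(p)}_m \overline{E^{(p)}_{m'}}\big] \approx f_n\, q_{m - m'}, \qquad q_k := \sum_{r} a_{r+k} \bar a_r,
\]
uniformly for $|m-n|, |m'-n|$ small compared to $n$. Since $|p(e^{-i\vartheta})|^2 = \sum_k q_k e^{-ik\vartheta}$, summing over $j, j'$ yields
\[
\mathbb{E}\big[|X_n(p)|^2\,\big|\, G^{\mathbb{C}}_{\leq M}\big] \approx f_n \cdot \frac{1}{2\pi}\int_0^{2\pi} |p(e^{-i\vartheta})|^2\, |B(e^{i\vartheta})|^2\, d\vartheta.
\]
The Harper-type barrier estimates underlying Theorem \ref{main} force the correct scale $(\log n)^{1/2} f_n \to \text{const}$, while Proposition \ref{GMCcritical} identifies the limiting measure $|B(e^{i\vartheta})|^2\, d\vartheta/(2\pi)$ with $\mathrm{GMC}_1(d\vartheta)$ after the right renormalization. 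Together they give the claimed limiting variance $\int_0^{2\pi} |p(e^{-i\vartheta})|^2\, \mathrm{GMC}_1(d\vartheta)$. The pseudocovariance $\mathbb{E}[E^{(p)}_m E^{(p)}_{m'}]$ vanishes due to $\mathbb{E}[\mathcal{N}_k^2] = 0$, so the limiting $\mathcal{Z}$ is a standard complex Gaussian.

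The main obstacle is the critical phenomenon already present in Theorem \ref{main}: a naive $L^2$ calculation overestimates the variance by a factor $\sqrt{\log n}$, so a barrier/tilting argument of Harper type is required to isolate the conditional Gaussian piece. For Theorem \ref{mainmultidim} the additional effort is limited since $d$ is fixed; one only needs to verify that these estimates are uniform under Fourier shifts $n \mapsto n + O(1)$, which is essentially immediate from the contour-integral setup since such a shift only perturbs the kernel $z^{-n}$ by the polynomial factor $p(1/z)$. Granted this uniformity, the conditional Gaussian CLT combined with the approximation of the low-frequency measure by $\mathrm{GMC}_1$ completes the proof.
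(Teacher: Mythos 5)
Your Parseval heuristic for why the factor $\int |p(e^{-i\vartheta})|^2\,\mathrm{GMC}_1(d\vartheta)$ replaces $\mathcal{M}_1$ is exactly right, and it is indeed what drives the paper's computation of the conditional variance (the proof of Proposition \ref{convergencechaospsi} opens with a Parseval identity producing precisely the weight $|p(e^{m/2u}e^{-i\vartheta})|^2$ against the approximating measure $\mu_{u,e^{-m/2u}}$). You are also correct that once the machinery for the case $p=1$ is in place, the polynomial $p$ adds essentially nothing: the paper proves Theorems \ref{main} and \ref{mainmultidim} simultaneously via Propositions \ref{badsum} and \ref{goodsum}, with $p$ carried along from the start.

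The genuine gap is the claim that, after conditioning on the low-frequency Gaussians, $X_n(p) = \sum_j b_j E^{(p)}_{n-j}$ becomes ``a linear combination of approximately Gaussian variables.'' It does not. The coefficients $d_m$ (hence the $E^{(p)}_m$) are Taylor coefficients of $\exp(\sqrt{\theta}\,G^{\mathbb{C}}_{>M})$, which are polynomials of unbounded degree in the high-frequency Gaussians; nothing in your decomposition isolates a Gaussian piece. The paper achieves conditional Gaussianity not by a high/low split of $G^{\mathbb{C}}$ but by splitting the compositions of $n+r$ into ``good'' (exactly one large part $k$, with $m_k=1$, lying in a window $(Kn/L,(K+1)n/L]$, all other parts $\le Kn/L$) and ``bad.'' The good sum is then literally $\sum_{K,q}\frac{\mathcal{N}_q}{\sqrt{q}}\sum_r a_r\,c_{n+r-q,Kn/L}$, i.e.\ a linear combination of the single Gaussians $\mathcal{N}_q$ with $\mathcal{F}_{\lfloor Kn/L\rfloor}$-measurable coefficients — an exact martingale with conditionally Gaussian increments — so a martingale CLT (with a Lindeberg $L^4$ condition, itself requiring a further splitting of each $\mathcal{N}_q$ into $q$ i.i.d.\ pieces) applies. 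Your decomposition keeps all the multi-Gaussian products inside $E^{(p)}_m$, and showing that they are negligible is exactly the content of Proposition \ref{badsum}, which your proposal does not address. Finally, the normalization $(\log n)^{1/2}f_n\to\mathrm{const}$ is not obtained via ``Harper-type barrier estimates'' here: the paper derives it from the critical Seneta--Heyde normalization in Proposition \ref{GMCcritical} together with the limiting Dickman distribution of $\frac1u\sum_{\ell\le u}\ell Z_\ell$ (Propositions \ref{convergencechaospsi} and \ref{limitDickman}); the barrier approach of Harper is how Soundararajan--Zaman obtained the order of magnitude, but it plays no role in this proof.
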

As in \cite{NPSV25}, we deduce the following: 
\begin{cor}
Fix and integer $\ell \geq 1$, and define
the matrix $(\mathcal{H}_{k_1, k_2})_{0 \leq k_1, k_2 \leq \ell}$ by  
$$ \mathcal{H}_{k_1, k_2} := \int_0^{2 \pi} 
e^{ i \vartheta (k_2-k_1)}  \mathrm{GMC}_1 (d \vartheta).
$$
Then, 
$$ ((\log n)^{1/4} c_{n+k})_{0 \leq k \leq \ell} 
\underset{n \rightarrow \infty}{\longrightarrow} 
\sqrt{\mathcal{H}} (\mathcal{Z}_k)_{0 \leq k \leq \ell}
$$
where $(\mathcal{Z}_k)_{1 \leq k \leq \ell}$
are i.i.d. complex normal variables, independent of $\mathrm{GMC}_1$, with
 $$\mathbb{E} [ \mathcal{Z}_k ] = 
  \mathbb{E} [ \mathcal{Z}_k^2], \; \; 
  \mathbb{E} [ |\mathcal{Z}_k|^2] = 1.$$ \label{toeplitz_dist}
\end{cor}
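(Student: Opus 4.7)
The plan is to deduce Corollary \ref{toeplitz_dist} from Theorem \ref{mainmultidim} by a complex Cramér--Wold argument. For an arbitrary $\alpha = (\alpha_0,\dots,\alpha_\ell) \in \mathbb{C}^{\ell+1}$, I would set $p(z) = \sum_{k=0}^{\ell} \alpha_k z^k$, so that $\sum_{k=0}^{\ell} \alpha_k\,(\log n)^{1/4} c_{n+k} = (\log n)^{1/4} X_n(p)$. Theorem \ref{mainmultidim} then yields convergence in distribution of this quantity to $\sqrt{V_\alpha}\, \mathcal{Z}$, where
$$V_\alpha \;=\; \int_0^{2\pi} |p(e^{-i\vartheta})|^2\,\mathrm{GMC}_1(d\vartheta) \;=\; \sum_{k_1,k_2=0}^{\ell} \alpha_{k_1}\overline{\alpha_{k_2}}\,\mathcal{H}_{k_1,k_2},$$
the second equality coming from the expansion $|p(e^{-i\vartheta})|^2 = \sum_{k_1,k_2} \alpha_{k_1}\overline{\alpha_{k_2}} e^{i\vartheta(k_2-k_1)}$ together with the definition of $\mathcal{H}$.

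Next I would check that this is exactly the law of $\sum_k \alpha_k (\sqrt{\mathcal{H}}\,\mathcal{Z})_k$. Since $\mathcal{H}$ is almost surely Hermitian positive semi-definite (being the Gram matrix of $\vartheta \mapsto e^{-ik\vartheta}$ against the positive measure $\mathrm{GMC}_1$), $\sqrt{\mathcal{H}}$ is well defined and Hermitian. Conditionally on $\mathrm{GMC}_1$, the random variable $\sum_k \alpha_k(\sqrt{\mathcal{H}}\,\mathcal{Z})_k$ is a linear combination of the i.i.d.\ circular complex Gaussians $\mathcal{Z}_j$, hence itself circular complex Gaussian with conditional variance
$$\sum_j \Big|\sum_k \alpha_k (\sqrt{\mathcal{H}})_{kj}\Big|^2 \;=\; \sum_{k,l} \alpha_k\overline{\alpha_l}\,\mathcal{H}_{kl} \;=\; V_\alpha,$$
using $(\sqrt{\mathcal{H}})^2 = \mathcal{H}$. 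Unconditionally, the two laws therefore agree.

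To conclude I would apply the real Cramér--Wold device to the $2(\ell+1)$-dimensional real vector with coordinates $\operatorname{Re}((\log n)^{1/4} c_{n+k})$ and $\operatorname{Im}((\log n)^{1/4} c_{n+k})$. Any real linear combination of these coordinates can be written as $\operatorname{Re}\!\bigl(\sum_k \alpha_k (\log n)^{1/4} c_{n+k}\bigr)$ by choosing $\alpha_k = a_k - i b_k$, and by the two preceding steps together with the continuous mapping theorem it converges in distribution to the matching real linear combination of $\sqrt{\mathcal{H}}(\mathcal{Z}_k)$. Tightness of the prelimit vectors is automatic from individual convergence of each coordinate, so Cramér--Wold then delivers joint convergence in distribution.

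The only real obstacle is bookkeeping: verifying that the parameterization of real linear functionals by complex $\alpha$ is surjective, and that the quadratic form $\alpha \mapsto V_\alpha$ produced by Theorem \ref{mainmultidim} matches the Hermitian form $\alpha^{*}\mathcal{H}\alpha$ dictated by the target vector $\sqrt{\mathcal{H}}(\mathcal{Z}_k)$. No analytic input beyond Theorem \ref{mainmultidim} is required.
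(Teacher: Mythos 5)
Your proposal is correct and follows essentially the same route as the paper: reduce via the real Cram\'er--Wold device to a single complex linear combination $\sum_k \alpha_k c_{n+k} = X_n(p)$, invoke Theorem~\ref{mainmultidim}, and then match the limit with $\sum_k\alpha_k(\sqrt{\mathcal{H}}\mathcal{Z})_k$ by comparing two circularly symmetric complex Gaussians through the identity $\int|p(e^{-i\vartheta})|^2\mathrm{GMC}_1(d\vartheta)=\alpha^*\mathcal{H}\alpha=\|\sqrt{\mathcal{H}}^{\,*}\alpha\|^2$. The only stylistic difference is that you phrase the variance check via conditioning on $\mathrm{GMC}_1$, while the paper compares expected squared moduli directly; the remark about tightness is superfluous, since Cram\'er--Wold already follows from convergence of the characteristic functions of all real linear projections.
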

\begin{proof}
It is sufficient to prove, 
for all reals $(b_k)_{0 \leq k \leq \ell}$, $(b'_k)_{0 \leq k \leq \ell}$, that 
$$(\log n)^{1/4} 
\left(\sum_{j=0}^{\ell} 
(b_{j} \Re c_{n+j} + b'_{j} \Im c_{n+j}) \right)$$
converges in distribution to 
$$\sum_{j=0}^{\ell} \left(b_j \Re \left( (\sqrt{\mathcal{H}} (\mathcal{Z}_k)_{0 \leq k \leq \ell} )_j \right) + 
b'_j \Im \left( (\sqrt{\mathcal{H}} (\mathcal{Z}_k)_{0 \leq k \leq \ell} )_j \right) \right)
$$
Letting  $a_k := b_k - i b'_k$, 
this convergence is equivalent to 
$$(\log n)^{1/4} 
 \left(\sum_{j=0}^{\ell} 
\Re (a_j c_{n+j}) \right) \underset{n \rightarrow \infty}{\longrightarrow} 
\sum_{j=0}^{\ell} 
\Re \left(a_j (\sqrt{\mathcal{H}} (\mathcal{Z}_k)_{0 \leq k \leq \ell} )_j\right) 
,$$
in distribution.
It is then enough to check the convergence in distribution 
$$X_n\left(z \mapsto \sum_{j=0}^\ell a_j  z^j \right) (\log n)^{1/4}
\underset{n \rightarrow \infty}{\longrightarrow}
\sum_{j=0}^{\ell} 
a_j (\sqrt{\mathcal{H}} (\mathcal{Z}_k)_{0 \leq k \leq \ell} )_j.
$$
By Theorem \ref{mainmultidim}, 
it is sufficient to check the following equality in distribution 
$$\left( \int_0^{2 \pi} \left|\sum_{j=0}^{\ell} a_j e^{-i j \vartheta}\right|^2 \mathrm{GMC}_1 (d \vartheta) \right)^{1/2} \mathcal{Z}
= \sum_{j=0}^{\ell} 
a_j (\sqrt{\mathcal{H}} (\mathcal{Z}_k)_{0 \leq k \leq \ell} )_j. $$
The right-hand side can be rewritten as 
$$\sum_{0 \leq j, k \leq \ell}
a_j (\sqrt{\mathcal{H}})_{j,k}
\mathcal{Z}_k
= \sum_{k=0}^{\ell} \left(\sum_{j=0}^{\ell} a_j (\sqrt{\mathcal{H}})_{j,k} \right) \mathcal{Z}_k,
$$
and then the two sides are complex Gaussian variables, both with expectation and expectation of the square equal to zero. 
It is then enough to check the equality of the expectation of their squared-moduli, i.e. 
$$\int_0^{2 \pi} \left|\sum_{j=0}^{\ell} a_j e^{-i j \vartheta}\right|^2 \mathrm{GMC}_1 (d \vartheta) 
= \sum_{k=0}^{\ell} 
\left| \sum_{j=0}^{\ell} 
a_j (\sqrt{\mathcal{H}})_{j,k}  \right|^2.$$
Expanding the squares and comparing the 
coefficients in $a_{j}\overline{a_{j'}}$
for $0 \leq j, j' \leq \ell$, we deduce that it is enough to check
$$\int_0^{2 \pi} 
e^{i \vartheta(j'-j)}\mathrm{GMC}_1 (d \vartheta)
= \sum_{k= 0}^{\ell} (\sqrt{\mathcal{H}})_{j,k}\overline{(\sqrt{\mathcal{H}})_{j',k}}.
$$
By definition, the left-hand side is 
$\mathcal{H}_{j,j'}$, whereas the right-hand side is, since $\sqrt{\mathcal{H}}$ is Hermitian: 
$$\sum_{k= 0}^{\ell} (\sqrt{\mathcal{H}})_{j,k}(\sqrt{\mathcal{H}})_{k,j'}
= ((\sqrt{\mathcal{H}})^2)_{j,j'}
= \mathcal{H}_{j,j'}.$$
\end{proof}
As a consequence of the main theorem, we also prove some asymptotics of low moments and tails of $|c_n|$. For example, we show that 
$$\mathbb{E}((\log(n))^{1/4}|c_n|) \underset{n \rightarrow \infty}{\longrightarrow}  \frac{\pi^{3/4}}{2}$$
(which answers a question in \cite{agg22}), and that for $y$ fixed,
$$\mathbb{P}\left((\log(n))^{1/4}|c_n| > y\right) \underset{n \rightarrow \infty}{\longrightarrow}  \frac{1}{1+y^2\sqrt{\pi}}.$$
In particular, we can identify the tail asymptotic conjectured in \cite{H20} in the context of random multiplicative functions. This is likely a universal behaviour and one would expect this to appear when computing the tails for sums of random multiplicative functions. This task is significantly simpler in the present setting, due to the closed form of the multiplicative chaos we have here compared to the random multiplicative functions case. We thank Christian Webb for this observation. 
\par
The present article is structured as follows.  
In Section \ref{approxchaos}, we provide a construction of the critical Gaussian multiplicative chaos on the unit circle which mixes truncation of Fourier series and harmonic extension of the logarithmically correlated field inside the unit disc. In Section \ref{splitting}, we split the expression of the coefficients $c_n$ into two sums, and show that Theorem \ref{main} and Theorem \ref{mainmultidim} are consequences of Propositions \ref{badsum} and \ref{goodsum}, each of these propositions concerning one of the two sums. 
Proposition \ref{badsum} is proven in Section \ref{proofbad}. In Section \ref{martingaleCLT}, we show, by using a suitable version of the martingale central limit theorem, that Proposition \ref{goodsum} is itself of consequence of 
two other propositions, which are proven in Section \ref{proofconvergencechaospsi} and Section \ref{prooflimitDickman}. The proof in Section \ref{proofconvergencechaospsi} uses the construction of the critical Gaussian multiplicative chaos given in Section \ref{approxchaos}. Finally, in Section 
\ref{asymptotics}, the asymptotics above on the moments and tails of the distribution
of $c_n$ are proven.
\section{Approximations of the critical Gaussian multiplicative chaos}
\label{approxchaos}
Starting from a logarithmically correlated field on the unit circle, two of most natural constructions of the Gaussian multiplicative chaos are obtained either by considering the harmonic extension of the field inside the unit disc or by truncating its Fourier series. In the present article, we need to use an approximation of the field mixing these two constructions. 
The precise result we need is the following, deduced from results by Junnila and Saksman in \cite{JS17}:

\begin{proposition} \label{GMCcritical}
For $n \in [1, \infty]$, $r \in (0,1]$, $(n,r) \neq (\infty,1)$, let 
$\mu_{n,r}$ be the random measure on $[0, 2 \pi)$
defined by 
$$ \mu_{n,r} (d \vartheta) =  (V_{n}(r)/2)^{1/2} e^{- V_{n}(r)/2}     e^{ 2 \Re \left( \sum_{1 \leq \ell \leq n} 
\frac{(r e^{i \vartheta})^\ell}{\sqrt{\ell}} \mathcal{N}_{\ell} \right)} \frac{d \vartheta}{2 \pi},$$
for 
$$V_n(r) := \operatorname{Var} \left(2 \Re \left( \sum_{1 \leq \ell \leq n} 
\frac{r^\ell}{\sqrt{\ell}} \mathcal{N}_{\ell}  \right) \right) =
2 \sum_{1 \leq \ell \leq n} \frac{r^{2\ell}}{\ell}. $$
Then, the family $(\mu_{n,r})_{n \in (1, \infty], 
r \in (0,1], (n, r) \neq (\infty,1)}$ of random measures converges in probability to the limiting random measure $\mathrm{GMC}_1$
defined in the introduction,
when $\min(n, (1-r^2)^{-1})$ tends to infinity.

\end{proposition}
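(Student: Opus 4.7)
The plan is to deduce the proposition from the convergence results of Junnila and Saksman in \cite{JS17}, who establish convergence in probability to $\mathrm{GMC}_1$ for two natural pure approximation schemes: the Fourier truncation $(\mu_{n,1})_{n \geq 1}$ at $r=1$, and the radial harmonic extension $(\mu_{\infty, r})_{r \in (0,1)}$ at $n = \infty$. The mixed scheme $\mu_{n,r}$ will be compared to one of these via an intermediate ``effective scale'' $m := \min(n, \lceil (1-r^2)^{-1} \rceil)$, chosen so that $m \to \infty$ precisely when $\min(n, (1-r^2)^{-1}) \to \infty$.

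As a preliminary observation, the Gaussian field
$$ X_{n,r}(\vartheta) := 2 \Re \sum_{\ell=1}^n \frac{(re^{i\vartheta})^\ell}{\sqrt{\ell}}\,\mathcal{N}_\ell $$
has covariance $2 \sum_{\ell=1}^n (r^{2\ell}/\ell) \cos(\ell(\vartheta-\vartheta'))$ and pointwise variance $V_n(r)$, which diverges proportionally to $\log m$ in both relevant regimes. This covariance converges off-diagonal to the critical log kernel $-2\log|2\sin((\vartheta-\vartheta')/2)|$, while the prefactor $(V_n(r)/2)^{1/2} e^{-V_n(r)/2}$ is the Seneta--Heyde normalization at scale $m$.

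The core of the argument is an $L^2$ comparison of $\mu_{n,r}(\phi)$ with $\mu_{m,1}(\phi)$ for a smooth test function $\phi$. Writing $X_{n,r} = X_{m,1} + Y_{n,r}$, the residual Gaussian field $Y_{n,r}$ splits into at most two pieces: a low-frequency part whose Fourier mode at frequency $\ell \leq m$ has amplitude $|r^\ell - 1|/\sqrt{\ell} = O(\sqrt{\ell}\,(1-r^2))$, and (when $n > m$) a high-frequency part $2\Re \sum_{m < \ell \leq n} r^\ell e^{i\ell \vartheta}\mathcal{N}_\ell/\sqrt{\ell}$ whose modes are suppressed by $r^\ell \leq r^m = O(1)$. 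The total variance of $Y_{n,r}$ is $O(1)$ uniformly in $(n,r)$ because $m(1-r^2) \leq 1$ in the first regime and $r^{2m}$ is bounded in the second. Combined with careful bookkeeping of the Seneta--Heyde prefactors and of the angular decorrelation scale $1/m$ of $Y_{n,r}$, this would yield $\mathbb{E}|\mu_{n,r}(\phi) - \mu_{m,1}(\phi)|^2 \to 0$, whence the claim follows from the convergence of $\mu_{m,1}$ to $\mathrm{GMC}_1$ proven in \cite{JS17}.

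The main obstacle is carrying out this $L^2$ estimate at criticality: the Gaussian exponential of the limiting log-correlated field has no finite second moment, so the control must be performed at the finite scale $m$ and must exploit the \emph{short-range} correlation of the residual field $Y_{n,r}$ (concentrated on angular scales of order $1/m$) rather than any logarithmic correlation structure. Tracking the Seneta--Heyde factors between the pair $(n,r)$ and the effective scale $m$ is where most of the computation lies, but the structure is entirely dictated by \cite{JS17} and by the uniqueness of the critical GMC.
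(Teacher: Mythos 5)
Your plan hinges on the $L^2$ comparison $\mathbb{E}|\mu_{n,r}(\phi) - \mu_{m,1}(\phi)|^2 \to 0$, and this step cannot work at criticality. With the Seneta--Heyde normalization, the second moment of each approximating mass $\mu_{n,r}(\phi)$ \emph{diverges}: expanding the square and using the covariance estimate $C_m(1,\vartheta,\vartheta') = -2\log|e^{i\vartheta}-e^{i\vartheta'}| + \mathcal{O}(1)$ down to angular scale $1/m$ gives $\mathbb{E}[\mu_{m,1}(\phi)^2] \asymp \log m \int\!\!\int |\vartheta-\vartheta'|^{-2}\mathds{1}_{|\vartheta-\vartheta'|\geq 1/m}\,d\vartheta\,d\vartheta' \asymp m\log m$. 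The same holds for $\mu_{n,r}(\phi)$ and for the cross-term. Showing that the difference has vanishing $L^2$ norm would therefore require cancellation to order $m\log m$ between three terms whose constants in front of the leading divergence are not identical (the three pairings involve the covariance kernels $C_n(r,\cdot,\cdot)$, $C_m(1,\cdot,\cdot)$, and the mixed one, which differ by the $\mathcal{O}(1)$ errors and the cutoffs). The observation that the residual Gaussian field $Y_{n,r}$ has $\mathcal{O}(1)$ pointwise variance and short-range correlation is correct, but at $\gamma=\sqrt{2}$ this no longer implies that the exponentiated fields are close in $L^2$ --- that inference is valid precisely in the $L^2$ subcritical regime $\gamma<1$ and fails here. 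Indeed, the critical GMC has infinite first moment, so even an $L^1$ comparison is unavailable; the convergence is genuinely only in probability.

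The paper sidesteps the moment problem entirely. After the covariance estimates of Lemma \ref{estimatecovariance}, it first gets convergence \emph{in distribution} from Theorem 1.1 of \cite{JS17} (which only needs the covariance asymptotics, no moment bounds), and then upgrades to convergence \emph{in probability} via Theorem 4.4 of \cite{JS17}. That theorem replaces the moment comparison you attempt by a structural ``nesting'' criterion (Definition 4.3 of \cite{JS17}): one exhibits linear operators $R_k$ --- here, multiplication of the $\ell$-th Fourier coefficient by $r_k^{|\ell|}\mathds{1}_{|\ell|\leq n_k}$, i.e. a Poisson kernel followed by a Dirichlet kernel --- such that $R_k X_{k'}$ stabilizes for $k'$ large and $\|R_k f - f\|_\infty \to 0$ for H\"older $f$ (the latter via Jackson's theorem). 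Your guiding intuition about the effective scale $m=\min(n,(1-r^2)^{-1})$ and about comparing to the $r=1$ scheme is correct and matches the paper's reduction to $C_{\min(n_k,(1-r_k^2)^{-1})}(1,\cdot,\cdot)$, but the mechanism by which the comparison is made probabilistically rigorous has to be the \cite{JS17} nesting criterion, not an $L^2$ estimate.
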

 \begin{remark}
 The definition of $\mathrm{GMC}_1$ in the   introduction corresponds to Proposition \ref{GMCcritical} in the particular case $n = \infty$. 
 \end{remark}
\begin{proof}
We start the proof of Proposition \ref{GMCcritical} by the following lemma, 
estimating the covariance of the fields which are considered: 

\begin{lemma} \label{estimatecovariance}
For $n \in [1,\infty]$, $r \in (0,1]$, $(n, r) \neq (\infty, 1)$,  $\vartheta, \vartheta' \in \mathbb{R}$,  let 
$$C_n(r, \vartheta, \vartheta') 
:= \operatorname{Cov} \left(   2 \Re \left( \sum_{1 \leq \ell \leq n} 
\frac{(r e^{i \vartheta})^\ell}{\sqrt{\ell}} \mathcal{N}_{\ell} \right),  2 \Re \left( \sum_{1 \leq \ell \leq n} 
\frac{(r e^{i \vartheta'})^\ell}{\sqrt{\ell}} \mathcal{N}_{\ell} \right)\right).$$
Then 
$$C_n (r, \vartheta, \vartheta') 
=2 \sum_{1 \leq \ell \leq n} \frac{r^{2 \ell}}{\ell}
 \cos( \ell(\vartheta - \vartheta')).$$
Moreover, we have the estimates: 
$$ C_n (r, \vartheta, \vartheta') 
=2  \min ( \log n, \log ( (1 -r^2)^{-1}), \log ( ||\vartheta - \vartheta'||^{-1}) ) + \mathcal{O}(1),$$
and 
$$C_n (r, \vartheta, \vartheta') 
=- 2 \log | e^{i \vartheta }- e^{i \vartheta'}| 
+ \mathcal{O} ( ( n^{-1} + (1-r^2)) ||\vartheta - \vartheta'||^{-1}). 
$$
where $||x||$ denotes the distance from $x$ to $2 \pi \mathbb{Z}$, and where $n^{-1} := 0$ for $n = \infty$. 
 \end{lemma}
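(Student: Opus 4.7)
The first identity is a direct computation. Writing $\mathcal{N}_\ell = X_\ell + i Y_\ell$ with $X_\ell, Y_\ell$ independent real Gaussians of variance $1/2$, the normalization $\mathbb{E}[\mathcal{N}_\ell] = \mathbb{E}[\mathcal{N}_\ell^2] = 0$, $\mathbb{E}[|\mathcal{N}_\ell|^2] = 1$ gives, for any complex $a_\ell, b_\ell$ and independent $\mathcal{N}_\ell$, the covariance formula $\operatorname{Cov}(2\Re \sum a_\ell \mathcal{N}_\ell, 2\Re \sum b_\ell \mathcal{N}_\ell) = 2\Re \sum a_\ell \overline{b_\ell}$. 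Taking $a_\ell = (re^{i\vartheta})^\ell/\sqrt{\ell}$, $b_\ell = (re^{i\vartheta'})^\ell/\sqrt{\ell}$ for $1 \leq \ell \leq n$ immediately yields the stated closed form.

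For the third estimate, I would first treat $n = \infty$ and then remove the restriction. For $r < 1$, the series $\sum_{\ell \geq 1} r^{2\ell}\cos(\ell\delta)/\ell$ (with $\delta := \vartheta - \vartheta'$) equals $-\log|1 - r^2 e^{i\delta}|$ by standard power series, while $-\log|1-e^{i\delta}| = \sum_{\ell\geq 1} \cos(\ell\delta)/\ell$. From the decomposition $1 - r^2 e^{i\delta} = (1-e^{i\delta}) + (1-r^2)e^{i\delta}$ and the lower bound $|1-e^{i\delta}| = 2|\sin(\delta/2)| \asymp \|\delta\|$, one gets $\log(|1-r^2 e^{i\delta}|/|1-e^{i\delta}|) = O((1-r^2)/\|\delta\|)$. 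For finite $n$, the missing tail $\sum_{\ell > n} r^{2\ell}\cos(\ell\delta)/\ell$ is bounded by Abel summation: the partial sums $\sum_{\ell=n+1}^M \cos(\ell\delta)$ are $O(\|\delta\|^{-1})$ and the positive sequence $r^{2\ell}/\ell$ is decreasing, so the tail is $O(1/(n\|\delta\|))$, producing the extra $n^{-1}/\|\delta\|$ error term.

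For the second estimate, set $L := \min(n, (1-r^2)^{-1}, \|\delta\|^{-1})$; the goal is $C_n(r,\vartheta,\vartheta') = 2\log L + O(1)$. For the head $\ell \leq L$, both $r^{2\ell}$ and $\cos(\ell\delta)$ stay bounded away from zero and above by $1$: indeed $r^{2\ell} \geq r^{2L} \geq \exp(-c L(1-r^2)) \geq e^{-c}$ (once $r$ is not tiny, which can be absorbed in the $O(1)$) and $\cos(\ell\delta) \geq \cos(L\|\delta\|) \geq \cos(1)$. Comparing with the harmonic sum, $\sum_{\ell \leq L} r^{2\ell}\cos(\ell\delta)/\ell = \log L + O(1)$. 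For the tail $L < \ell \leq n$ (if any), a case split shows it is $O(1)$: when $L = (1-r^2)^{-1}$, the bound $\sum_{\ell > L} r^{2\ell}/\ell \leq (L(1-r^2))^{-1} = 1$ works; when $L = \|\delta\|^{-1}$, Abel summation against the decreasing sequence $r^{2\ell}/\ell \leq 1/L$ together with $|\sum_{L<\ell \leq m}\cos(\ell\delta)| \leq 1/|\sin(\delta/2)| = O(L)$ gives $O(1)$.

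The main technical care lies in estimate (2), because the three different regimes (truncation, radial cut-off, angular separation) each require a distinct argument, and the case analysis must cover all orderings of $n$, $(1-r^2)^{-1}$, $\|\delta\|^{-1}$. Once (3) is established, (2) can alternatively be streamlined by combining $-2\log|1-e^{i\delta}| = 2\log(\|\delta\|^{-1}) + O(1)$ with the Abel-summation tail estimate, and using the subadditivity $-\log|1-r^2 e^{i\delta}| \leq \min(\log((1-r^2)^{-1}), \log\|\delta\|^{-1}) + O(1)$, so that the whole result reduces to controlling the truncation error in the regime $n < \min((1-r^2)^{-1}, \|\delta\|^{-1})$.
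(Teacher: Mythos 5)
Your derivation of the closed form and of the third estimate is sound, and it broadly mirrors the paper (the paper controls the radial comparison $-2\log|1-r^2e^{i\delta}|\;\text{vs.}\;-2\log|1-e^{i\delta}|$ via the integral of the derivative of the logarithm, while you use the decomposition $1-r^2e^{i\delta}=(1-e^{i\delta})+(1-r^2)e^{i\delta}$; both work, though you should also invoke $|1-r^2e^{i\delta}|\ge 1-r^2$ to handle the case $(1-r^2)/\|\delta\|\gtrsim 1$). Your tail bounds via Abel summation are the same mechanism as the paper's summation by parts with the geometric-series bound on $S(p)$.

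However, the head estimate in your proof of the second statement has a genuine gap. Setting $L:=\min(n,(1-r^2)^{-1},\|\delta\|^{-1})$, you argue that for $\ell\le L$ one has $r^{2\ell}\cos(\ell\delta)\in[c',1]$ for some absolute constant $c'<1$, and then assert "comparing with the harmonic sum" that
$$\sum_{\ell\le L}\frac{r^{2\ell}\cos(\ell\delta)}{\ell}=\log L+O(1).$$
But a two-sided bound $c'\le r^{2\ell}\cos(\ell\delta)\le 1$ only yields $c'\log L+O(1)\le \sum_{\ell\le L}r^{2\ell}\cos(\ell\delta)/\ell\le\log L+O(1)$, i.e.\ $\asymp\log L$, not $=\log L+O(1)$: the discrepancy with the harmonic sum could be as large as $(1-c')\log L$, which is not $O(1)$. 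What is actually needed (and what the paper proves in the regime $n<\min((1-r^2)^{-1},\|\delta\|^{-1})$) is the finer pointwise bound
$$\bigl|1-r^{2\ell}\cos(\ell\delta)\bigr|\le \ell\bigl((1-r^2)+\|\delta\|\bigr)=O(\ell/L)\quad\text{for }\ell\le L,$$
which comes from $1-r^{2\ell}\le\ell(1-r^2)$ and $1-\cos(\ell\delta)\le\ell\|\delta\|$. Dividing by $\ell$ and summing over $\ell\le L$ gives total error $O(1)$, and only then does the head equal $\log L+O(1)$. Your alternative "streamlined" sketch at the end is also incomplete: the subadditivity bound on $-\log|1-r^2e^{i\delta}|$ is one-sided (the matching lower bound from $|1-r^2e^{i\delta}|\le(1-r^2)+\|\delta\|$ is not mentioned), and the regime $n<\min((1-r^2)^{-1},\|\delta\|^{-1})$, which is precisely where the $O(\ell/L)$ bound is needed, is left unresolved.
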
 
\begin{proof}
The exact formulas are easily obtained by direct computation: for $n = \infty$, $r \in (0,1)$, 
notice that the series of normal variables are converging in $L^2$, which justifies to write the infinite sums. 
Now, we observe that 
for 
$$S( p) = \sum_{1 \leq \ell \leq p} r^{2 \ell} 
\cos (\ell ( \vartheta- \vartheta') ) 
= \Re \sum_{1 \leq \ell \leq p} (r^2 e^{i ( \vartheta- \vartheta')})^{\ell},
$$
and for $n < n' < \infty$, 
$$\sum_{n < \ell  \leq n'} \frac{r^{2 \ell}}{\ell}
 \cos( \ell(\vartheta - \vartheta'))
 = \int_{(n, n']} \frac{d S(p)}{p} 
 = \frac{S(n')}{n'} - \frac{S(n)}{n} 
 + \int_n^{n'} \frac{S(p)}{p^2} dp.$$
Now, summing the geometric series, we get 
$$|S(p)| \leq \frac{2}{ |1 - r^2 e^{i (\vartheta - \vartheta')}|},$$
where we observe that
$$|1 - r^2 e^{i (\vartheta - \vartheta')}| \geq 1 - r^2.$$
 Moreover, for $r \in (0,1/2] $, 
$$ |1 - r^2 e^{i (\vartheta - \vartheta')}| \geq 1/2 \geq ||\vartheta - \vartheta'||/(2 \pi) $$
and for $r \in (1/2, 1]$, 
$$|1 - r^2 e^{i (\vartheta - \vartheta')}| \geq |r^{-2} - e^{i (\vartheta - \vartheta')}| /4  \geq |1 -e^{i (\vartheta - \vartheta')}|/4 \geq ||\vartheta - \vartheta'||/(2 \pi).$$
 Hence, 
$$|S(p)| \leq 4 \pi \min ( (1-r^2)^{-1}, ||\vartheta - \vartheta'||^{-1})$$
and 
$$\left|\sum_{n < \ell  \leq n'} \frac{r^{2 \ell}}{\ell}
 \cos( \ell(\vartheta - \vartheta')) \right| \leq 
 12 \pi n^{-1} \min ( (1-r^2)^{-1}, ||\vartheta - \vartheta'||^{-1}). $$
If we do not have $  r = 1$ and $\vartheta \equiv \vartheta'$ modulo $2 \pi$ simultaneously, and if $n < \infty$, 
we can let $n' \rightarrow \infty$, and 
by identifying the Taylor series of the logarithm, we deduce 
$$ C_n (r, \vartheta, \vartheta')  
= - 2 \Re \log (  1 - r^{2} e^{i (\vartheta - \vartheta')} )
+ \mathcal{O}\left( n^{-1} \min ( (1-r^2)^{-1}, ||\vartheta - \vartheta'||^{-1})\right). $$
If $n = \infty$, this estimate still holds, the error term being equal to zero. 
Writing the variation of the logarithm as the integral of its derivative, we deduce 
$$ | - 2 \Re \log (  1 - r^{2} e^{i (\vartheta - \vartheta')} ) 
+ 2 \Re \log (  1 -  e^{i (\vartheta - \vartheta')} )|
\leq 2 (1 - r^2) \sup_{t \in [r^2,1]}
 | 1 -t e^{i (\vartheta - \vartheta')}|^{-1}$$
which is bounded by $4 \pi (1-r^2) ||\vartheta - \vartheta'||^{-1}$ by the previous estimate. 
This provides the estimate 
\begin{align*}
C_n (r, \vartheta, \vartheta')  
& = - 2 \Re \log (  1 -   e^{i (\vartheta - \vartheta')} )
\\ & + \mathcal{O}\left( n^{-1} \min ( (1-r^2)^{-1}, ||\vartheta - \vartheta'||^{-1}) + (1-r^2) ||\vartheta - \vartheta'||^{-1} \right)
\end{align*}
which implies the second estimate of the lemma. 
If $$n \geq  \min ( (1-r^2)^{-1}, ||\vartheta - \vartheta'||^{-1}), $$
we have 
$$ C_n (r, \vartheta, \vartheta')  
= - 2 \Re \log (  1 - r^{2} e^{i (\vartheta - \vartheta')} )
+ \mathcal{O}(1),$$
which, by the estimate 
$$ |1 - r^{2} e^{i (\vartheta - \vartheta')}| \geq \max(1 - r^2, ||\vartheta - \vartheta'||/ (2 \pi)) $$
  proven above, and the estimate 
  $$ |1 - r^{2} e^{i (\vartheta - \vartheta')}|
  \leq 1 - r^2 + ||\vartheta - \vartheta'||$$
  obtained by triangle inequality, gives
   $$C_{n} (r, \vartheta, \vartheta') 
   = 2 \min ( \log ((1-r^2)^{-1}), \log (||\vartheta - \vartheta'||^{-1}))  + \mathcal{O}(1),$$
i.e. the first estimate of the lemma in the case 
$$n \geq  \min ( (1-r^2)^{-1}, ||\vartheta - \vartheta'||^{-1}).$$
In the opposite case, we observe that in the sum 
$$2 \sum_{1 \leq \ell \leq n} \frac{r^{2 \ell}}{\ell} \cos(\ell(\vartheta -\vartheta')),$$
we can replace $r^{2 \ell} \cos (\ell (\vartheta-\vartheta'))$ by $1$, changing this quantity by 
at most $\ell((1 - r^2) +|| \vartheta - \vartheta'||) $, 
which gives 
$$2 \sum_{1 \leq \ell \leq n} \frac{r^{2 \ell}}{\ell} \cos(\ell(\vartheta -\vartheta'))
= 2 \sum_{1 \leq \ell \leq n} \frac{1}{\ell} 
+ \mathcal{O} \left( \sum_{1 \leq \ell \leq n} 
\frac{\ell((1 - r^2) +|| \vartheta - \vartheta'||)}{\ell} \right).$$
Since we assume 
$$n<   \min ( (1-r^2)^{-1}, ||\vartheta - \vartheta'||^{-1}),$$
we deduce 
$$2 \sum_{1 \leq \ell \leq n} \frac{r^{2 \ell}}{\ell} \cos(\ell(\vartheta -\vartheta')) = 2 \log n + \mathcal{O}(1),$$
which corresponds to the first estimate of the lemma.

\end{proof}
We now use Lemma \ref{estimatecovariance} in order to prove Proposition \ref{GMCcritical}. In this case $r = 1$, Proposition \ref{GMCcritical}
is already proven in \cite{JS17}. 
Indeed, the Gaussian field  we obtain in the exponent of the exponential for $r = 1$ corresponds to the regularization $X_{2,n}$ in \cite{JS17}, the only difference being the absence of random constant term $2 \sqrt{\log 2} A_0$, which multiplies all the measures by an independent log-normal variable, and then does not change their properties of convergence in probability.  

 Let $\mu$ be the limit in probability of $\mu_{n, 1}$
 when $n \rightarrow \infty$. It is enough to prove convergence in probability of $\mu_{n_k, r_k}$ towards $\mu$ when $k \rightarrow \infty$, for any sequence $(n_k, r_k)_{k \geq 1}$
 such that $n_k \in [1,\infty]$, $r_k \in (0,1]$, 
 $(n_k, r_k) \neq (1, \infty)$, 
 $\min(n_k, (1-r_k^2)^{-1})$ is increasing and goes to infinity when $k \rightarrow \infty$. 
 From the estimates of Lemma \ref{estimatecovariance}, 
 it is enough to prove convergence in probability of 
 $(\nu_k)_{k \geq 1}$ towards $\mu$, where 
 $$\nu_k (d \vartheta) = 
 e^{- V_{n_k}(r_k)/2}     e^{ 2 \Re \left( \sum_{1 \leq \ell \leq n_k} 
\frac{(r_k e^{i \vartheta})^\ell}{\sqrt{\ell}} \mathcal{N}_{\ell} \right)} \rho_k( d \vartheta)$$
and
$$\rho_k (d \vartheta) = \left( \min (\log n_k, \log ((1 - r_k^2)^{-1})) \right)^{1/2} (d \vartheta/2 \pi).$$
 The esimates of  Lemma \ref{estimatecovariance}, together with the convergence of $\mu_{n,1}$ towards $\mu$,
 also imply that the covariances 
 $$(\vartheta, \vartheta') \mapsto C_{n_k} (r_k, \vartheta, \vartheta')$$
 and 
  $$(\vartheta, \vartheta') \mapsto C_{\min(n_k, (1-r_k^2)^{-1})} (1, \vartheta, \vartheta')$$
  satisfy the assumptions of Theorem 1.1. of \cite{JS17}. This ensures that 
  $\nu_k$ converges to $\mu$ in distribution. 
  We use Theorem 4.4. of \cite{JS17} to upgrade this convergence to a convergence in probability. 
With the notation of \cite{JS17}, we take for $X$ the random distribution given by 
$$X = 2 \Re \left( \sum_{\ell \geq 1} \frac{(e^{i \vartheta})^\ell}{\sqrt{\ell}} \mathcal{N}_{\ell} \right),$$
  we take the approximating fields $(X_k)_{k \geq 1}$ given by 
$$X_k = 2 \Re \left( \sum_{1 \leq \ell \leq \min(n_k, (1-r_k^2)^{-1})} \frac{(e^{i \vartheta})^\ell}{\sqrt{\ell}} \mathcal{N}_{\ell} \right),$$
and we consider the linear operators $(R_k)_{k \geq 1}$, $R_k$
being obtained by multiplying 
the $\ell$-th Fourier coefficient of the fields by $r_k^{|\ell|} \mathds{1}_{|\ell| \leq n_k}$, for all $\ell \in \mathbb{Z}$. The operator $R_k$ can be also defined as the convolution with a Poisson kernel, followed by 
the convolution with a Dirichlet kernel, which guarantees that $R_k$ maps continuous functions to continuous functions. 

One checks that Theorem 4.4. of \cite{JS17} implies the  convergence in probability of $(\nu_k)_{k \geq 1}$ towards $\mu$, provided that we prove that the conditions given in Definition 4.3. of 
\cite{JS17} are satisfied. For fixed $k \geq 1$, 
we have $R_k X_{k'}$ independent of $k'$ as soon as 
$\min(n_{k'}, (1-r_{k'}^2)^{-1}) \geq n_k$, which implies the condition 2. of \cite{JS17}, Definition 4.3. It remains to check the condition 1., i.e. 
$$|| R_k f - f ||_{\infty} \underset{k \rightarrow \infty}{\longrightarrow} 0$$
for all $(2 \pi)$-periodic H\"older continuous functions $f$, where $|| \cdot ||_{\infty}$ denotes the supremum norm. It is sufficient to prove, for $r <1$ tending to $1$
and $n \geq 1$ tending to infinity, 
$$||P_r \star f - f||_{\infty} \underset{k \rightarrow \infty}{\longrightarrow} 0,$$
 and 
 $$||S_n ( P_r \star f) -P_r \star f  ||_{\infty} \underset{k \rightarrow \infty}{\longrightarrow} 0 $$
where $P_r$ is the Poisson kernel given by 
$$P_r (\vartheta) = \sum_{\ell \in \mathbb{Z}} r^{|\ell|} e^{i \ell\vartheta},$$
where $S_n$ is the operator 
restricting the Fourier series to 
the terms of index between $-n$ and $n$, 
and where 
$$f \star g (\vartheta) = \int_0^{2 \pi} f(\vartheta') 
g(\vartheta- \vartheta') \frac{d \vartheta}{2 \pi}.$$
If $f$ is $\alpha$-H\"older for $\alpha \in (0,1)$, 
and if 
$$||f||_{\mathcal{C}^{\alpha}} 
:= \sup_{\vartheta, \vartheta' \in \mathbb{R}, \vartheta \neq \vartheta'} |f(\vartheta) - f(\vartheta')| |\vartheta - \vartheta'|^{-\alpha},$$
we have that $ ||P_r \star f - f||_{\infty}$ is bounded by 
$$ \sup_{\vartheta_0 \in [0, 2 \pi)} \int_{-\pi}^{\pi} P_r (\vartheta) | f(\vartheta_0 - \vartheta)
- f(\vartheta_0)| \frac{d \vartheta}{2 \pi} \leq 
||f||_{\mathcal{C}^{\alpha}}
 \int_{-\pi}^{\pi} P_r (\vartheta) |\vartheta|^{\alpha} d \vartheta. 
$$
where we have the estimate 
$$P_r (\vartheta) = \frac{1 - r^2}{1 - 2r  \cos \vartheta + r^2} = \frac{1 - r^2}{(1 - r)^2 
+ (\sqrt{2 r (1 - \cos \vartheta)})^2}
\leq  \frac{1 - r^2}{2(1 - r)
\sqrt{2 r (1 - \cos \vartheta)}},
$$
which is, for $r \in (1/2, 1)$ and $|\vartheta| \leq \pi$, dominated by $1/|\vartheta|$: moreover, $P_r (\vartheta)$
tends to zero when $r \rightarrow 1$ for all $\vartheta \neq 0$. 
By dominated convergence,
$$||P_r \star f - f||_\infty \underset{r \rightarrow 1}{\longrightarrow} 0. $$
On the other hand 
$$||S_n ( P_r \star f) -P_r \star f ||_{\infty}=
 ||P_r \star (S_n f  -  f) ||_{\infty} \leq 
 ||S_n f - f||_{\infty}.$$
 The equality is due to the fact that $S_n$ is the convolution with a Dirichlet kernel, and then commutes with the convolution with $P_r$. The inequality is due to the fact that for any continuous, $(2 \pi)$-periodic function $g$, and for all $\vartheta \in [0, 2\pi)$, 
 $$|P_r \star g(\vartheta)| = \left| \int_0^{2 \pi} P_r (\vartheta - \vartheta') g(\vartheta') \frac{d \vartheta'}{2 \pi} \right| 
 \leq ||g||_{\infty} \int_0^{2 \pi} P_r (\vartheta - \vartheta') \frac{d \vartheta'}{2 \pi} \leq ||g||_{\infty}.$$
It now only remains to prove
$$||S_n f - f||_{\infty} \underset{n \rightarrow \infty}{\longrightarrow} 0,$$
which is a consequence of Jackson's theorem on the uniform convergence of Fourier series of H\"older
continuous functions.

\end{proof}

\section{Splitting of the coefficients into two parts}
\label{splitting}
For $\theta = 1$, $n \geq 1$, the coefficient $c_n$ of the HMC is given, as in \cite{NPS23}, by 
\begin{equation}c_n = \sum_{m \in S_n} \prod_{k=1}^n \frac{\mathcal{N}_k^{m_k}}{ m_k! k^{m_k/2}} \label{cnexpansion}
\end{equation}
where $S_n$ is the set of compositions 
$m = (m_k)_{k \geq 1}$ of $n$, i.e. sequences of nonnegative integers such that 
$$\sum_{k \geq 1} k m_k = n.$$

We introduce an integer parameter $L \geq 2$, and 
we decompose, for integers $n \geq 1, r \geq 0$, 
$c_{n+r}$ as $$ c_{n+r} = c^{(L)}_{n,r,g}  + c^{(L)}_{n,r,b},$$
where 
$$ c^{(L)}_{n,r,g}  = \sum_{m \in S^{(L)}_{n, r,g}}
\prod_{k=1}^n \frac{\mathcal{N}_k^{m_k}}{ m_k! k^{m_k/2}} $$
and $$ c^{(L)}_{n,r,b}  = \sum_{m \in S^{(L)}_{n,r,b}}
\prod_{k=1}^n \frac{\mathcal{N}_k^{m_k}}{ m_k! k^{m_k/2}}. $$

Here, $S_{n,r,g}^{(L)}$ is the set of "good" compositions of $n+r$, defined as compositions of $n+r$ such that for an integer $K$, $1 \leq K \leq L-1$, the largest $k$ with $m_k > 0$ satisfies $m_k= 1$ and is in  interval $(Kn/L,  (K+1)n/L]$, whereas the second largest $k$ with $m_k > 0$ is at most $Kn/L$. 
The set $S_{n,r,g}^{(L)}$ of "bad" compositions is the set of compositions of $n+r$ which are not in $S_{n,r,b}^{(L)}$. 
Our goal is to prove the following propositions: 
\begin{proposition} \label{badsum}
For all $\varepsilon > 0$, $r \geq 0$,  
$$\underset{L \rightarrow \infty}{\lim \sup} \; 
\underset{n \rightarrow \infty}{\lim \sup} \; 
\mathbb{P} (|c^{(L)}_{n,r,b}| (\log n)^{1/4} \geq \varepsilon)   = 0.$$
\end{proposition}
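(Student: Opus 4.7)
The plan is to partition $S_{n,r,b}^{(L)}$ into four disjoint subsets according to the way a composition $m$ fails to lie in $S_{n,r,g}^{(L)}$: (A) all parts are $\le n/L$; (B) the largest part exceeds $n$ (possible only for $r \ge 1$); (C) the largest part lies in $(n/L,n]$ with multiplicity $\ge 2$; (D) the largest part $k \in (Kn/L,(K+1)n/L]$ has multiplicity $1$, but the second largest part also lies in $(Kn/L,k)$. Writing $c_{n,r,b}^{(L)} = c_A + c_B + c_C + c_D$ accordingly, orthogonality in $L^2$ of the Gaussian monomials indexed by distinct compositions yields the exact formula
$$\mathbb{E}|c_X|^2 \;=\; \sum_{m \in S_X} \prod_k \frac{1}{m_k!\,k^{m_k}},$$
readable as a coefficient in a truncated exponential generating series.

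Cases (B) and (C) are handled by the second moment alone. For (B) there are only $O(r)$ admissible largest parts $k \in (n,n+r]$, each contributing at most $1/k$ times a bounded smooth factor, so $\mathbb{E}|c_B|^2 = O(r/n)$. For (C) the extra factor $1/k^t$ with $t \ge 2$ and $k > n/L$ yields, after summing over $k$ and $t$, $\mathbb{E}|c_C|^2 = O(L/n)$. In both cases $(\log n)^{1/2}\mathbb{E}|c_X|^2 \to 0$ as $n \to \infty$, and Markov's inequality closes the argument.

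The harder cases are (A) and (D). For (A) the $L^2$ equals $[z^{n+r}]\exp\!\bigl(\sum_{k \le n/L} z^k/k\bigr)$, asymptotic to the Dickman value $\rho(L)$, which does not decay after multiplication by $\sqrt{\log n}$. The remedy is to transfer the better-than-square-root cancellation to the smooth setting: using Proposition \ref{GMCcritical} with truncation parameter $n/L$ and adapting the martingale argument from Section \ref{martingaleCLT} for the good part, one would prove an estimate of the form
$$\mathbb{E}|c_A| \;\lesssim\; \sqrt{\rho(L)}\,(\log n)^{-1/4},$$
so that Markov yields a probability bound of order $\sqrt{\rho(L)}/\varepsilon$, vanishing as $L \to \infty$. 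For (D) the naive $L^2$ is only of order $1/L$; the missing factor $(\log n)^{-1/4}$ comes from the residual smooth coefficient $c^{(k'-1)}_{n+r-k-k'}$, which is itself critical and inherits the better-than-square-root cancellation. Conditioning bin by bin on the Gaussians indexed below $Kn/L$, and controlling the resulting bilinear Gaussian chaos by a Khintchine-type inequality, the typical size $(\log n)^{-1/4}$ of the residual coefficient transfers to $c_D$, and summing over $K \in \{1,\dots,\lceil L/2 \rceil\}$ yields a bound tending to $0$ as $L \to \infty$.

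The technical heart is case (D): the two large factors nearly saturate the total weight $n$, so that $L^2$ cancellation across the orthogonal family is too weak on its own, and one must exploit the critical-type decay of the residual smooth coefficient rather than merely its second moment. This mirrors the mechanism behind Proposition \ref{goodsum} and relies on the sharper smooth-coefficient estimates built around Proposition \ref{GMCcritical}.
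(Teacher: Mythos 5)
Your plan differs from the paper's in a fundamental way, and it has a genuine error in the hardest case.

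The paper does not try to bound $\mathbb{E}|c_A|$ or $\mathbb{E}|c_D|^2$ directly. Instead it conditions on $\mathcal{F}_q$ for $q=\lfloor n/(1+\log n)\rfloor$, writes $\mathbb{E}[|c^{(L)}_{n,r,b}|^2\,|\,\mathcal{F}_q]=\sum_s |c_{s,q}|^2\,\mathbb{P}(E^{(L)}_{n,r,s,q})$, and observes that the bad event for the residual permutation on $[q+1,n+r]$ automatically includes the constraint \emph{no cycle of length at most $q$}, which by itself costs a factor $1/q$ via the Feller coupling. Combining this $q^{-1}$ with the tightness of $q^{-1}(\log q)^{1/2}\sum_s|c_{s,q}|^2$ (Lemma~7.3 of \cite{NPS23}) absorbs the $(\log n)^{1/2}$ prefactor, and the combinatorial analysis of the cycle structure then produces the extra $(\log L)/L$ needed to make the double $\limsup$ vanish. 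That ``no small cycle costs $1/q$'' device is the engine of the whole argument and is absent from your proposal.

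The concrete error is in case (D). You assert that the naive $L^2$ of $c_D$ is $O(1/L)$, but it is in fact $O(1)$ uniformly in $L$. For each bin $K$ the probability that a random permutation of size $n+r$ has its two largest cycles of lengths $\ell,\ell'\in(Kn/L,(K+1)n/L]$ is $\asymp \sum_{\ell,\ell'\text{ in bin}}\frac{1}{\ell\ell'}\asymp \left(\log\frac{K+1}{K}\right)^2\asymp K^{-2}$, and $\sum_{K\ge 1}K^{-2}$ converges to a constant, not to $0$ as $L\to\infty$. So even if you could manufacture the extra $(\log n)^{-1/4}$ decay you invoke for the residual coefficient, the resulting bound for $(\log n)^{1/4}|c_D|$ would be $O(1)$ with no $L$-dependence, and the conclusion $\limsup_{L}\limsup_n\mathbb{P}(\cdot)=0$ would not follow. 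The paper's estimate produces a genuine $L$-decay only because of the additional $q^{-1}$ from the small-cycle constraint, which you never extract.

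Case (A) is also not established by the proposal as written. The bound $\mathbb{E}|c_A|\lesssim\sqrt{\rho(L)}\,(\log n)^{-1/4}$ is plausible (it is a truncated version of the better-than-square-root cancellation of \cite{SZ22}), but ``adapting the martingale argument from Section~\ref{martingaleCLT}'' cannot deliver it: the martingale central limit theorem yields convergence in distribution, which gives no $L^1$ control. You would need to redo the first-moment estimate of \cite{SZ22} for the truncated coefficient $c_{n+r,\,n/L}$, a nontrivial task that the proposal only gestures at. By contrast the paper sidesteps any $L^1$ estimate on individual truncated coefficients entirely; the better-than-square-root cancellation enters only through the cited tightness of $q^{-1}(\log q)^{1/2}\sum_{s}|c_{s,q}|^2$, applied at a much smaller scale $q\asymp n/\log n$ than $n/L$.

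In short: cases (B) and (C) of your decomposition are fine and easy, your strategy for (A) is incomplete, and your quantitative claim in (D) is wrong. The missing ingredient throughout is the conditional second-moment factorisation over $\mathcal{F}_q$ at the scale $q\asymp n/\log n$ and the resulting $1/q$ factor from the ``no small cycles'' constraint, together with the tightness result that converts this factor into the needed $(\log n)^{-1/2}$ gain.
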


\begin{proposition} \label{goodsum}
For a sufficiently large integer $L \geq 2$, and for any complex polynomial 
$$p : z \mapsto \sum_{r = 0}^d a_r z^r,$$
we have 
$$\sum_{r = 0}^d a_r c^{(L)}_{n,r,g}  
\underset{n \rightarrow \infty}{\longrightarrow} 
 b(L) \left(\int_0^{2 \pi} 
|p(e^{-i \vartheta})|^2 \mathrm{GMC}_1 (d \vartheta) \right)^{1/2} \mathcal{Z} $$
where $b(L)$ is deterministic, depends only on 
$L$ and tends to $1$ when $L \rightarrow \infty$. 
\end{proposition}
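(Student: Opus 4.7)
The plan is to reorganize the good part as a martingale sum over the scale parameter $K$ and apply a martingale central limit theorem. The key structural observation is that every composition in $S^{(L)}_{n,r,g}$ decomposes uniquely as a spike of multiplicity one at a maximal index $k \in I_K := (Kn/L,(K+1)n/L]$ (for some $1 \le K \le L-1$) together with a residual composition of $n+r-k$ supported on indices at most $Kn/L$. This yields
\[
\sum_{r=0}^d a_r c^{(L)}_{n,r,g} = \sum_{K=1}^{L-1} M^{(n)}_K,
\qquad
M^{(n)}_K := \sum_{k \in I_K} \frac{\mathcal{N}_k}{\sqrt{k}}\,\Psi^{(K)}_{n,k},
\]
with
\[
\Psi^{(K)}_{n,k} := \sum_{r=0}^d a_r\,[z^{n+r-k}]\exp\!\Bigl(\sum_{j \le Kn/L}\frac{z^j}{\sqrt{j}}\mathcal{N}_j\Bigr)
\]
measurable with respect to $\mathcal{F}_K := \sigma(\mathcal{N}_j : 1 \le j \le Kn/L)$. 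Since $\mathcal{N}_k$ for $k \in I_K$ is independent of $\mathcal{F}_K$, conditional on $\mathcal{F}_K$ the increment $M^{(n)}_K$ is a centered complex Gaussian with conditional variance $V^{(n)}_K := \sum_{k \in I_K}|\Psi^{(K)}_{n,k}|^2/k$, and the partial sums $(\sum_{J \le K} M^{(n)}_J)_K$ form a complex martingale with conditionally Gaussian increments.

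To this martingale I would apply a stable/mixing central limit theorem for conditionally Gaussian triangular arrays (the one formalized in Section \ref{martingaleCLT} along the lines of \cite{NPSV25}). The hypotheses to verify are (i) convergence in probability of the total quadratic variation $(\log n)^{1/2}\sum_{K=1}^{L-1}V^{(n)}_K$ to a limit $\mathcal{V}^{(L)}$, and (ii) a Lindeberg-type smallness of the individual blocks, which for conditionally Gaussian increments reduces to tightness of each $(\log n)^{1/2}V^{(n)}_K$. Granting these, the CLT delivers
\[
(\log n)^{1/4}\sum_{r=0}^d a_r c^{(L)}_{n,r,g}\underset{n\rightarrow\infty}{\longrightarrow}\sqrt{\mathcal{V}^{(L)}}\,\mathcal{Z}
\]
in distribution, with $\mathcal{Z}$ a standard complex Gaussian independent of $\mathcal{V}^{(L)}$; matching the target statement then amounts to identifying $\mathcal{V}^{(L)}=b(L)^2\int_0^{2\pi}|p(e^{-i\vartheta})|^2\,\mathrm{GMC}_1(d\vartheta)$ with $b(L) \to 1$ as $L \to \infty$.

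The computation of $\mathcal{V}^{(L)}$ starts from the Fourier representation
\[
\Psi^{(K)}_{n,k} = \frac{1}{2\pi}\int_0^{2\pi} p(e^{-i\vartheta})\,e^{-i(n-k)\vartheta}\exp\bigl(G^{(K)}_n(\vartheta)\bigr)d\vartheta, \qquad G^{(K)}_n(\vartheta) := \sum_{j \le Kn/L}\frac{e^{ij\vartheta}}{\sqrt{j}}\mathcal{N}_j,
\]
so that $V^{(n)}_K$ becomes a double integral on $[0,2\pi)^2$ weighted by the kernel $\kappa^{(n)}_K(\alpha) := \sum_{k \in I_K}e^{ik\alpha}/k$. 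I would then split the work along the lines already announced in the paper: (a) a first-moment / Dickman-type calculation yielding $(\log n)^{1/2}\mathbb{E}[V^{(n)}_K] \to c_K\,(2\pi)^{-1}\int|p(e^{-i\vartheta})|^2 d\vartheta$ for explicit constants $c_K$ with $\sum_{K=1}^{L-1}c_K \to 1$ as $L \to \infty$ (handled in Section \ref{prooflimitDickman}); and (b) a concentration-to-chaos upgrade showing that $(\log n)^{1/2}V^{(n)}_K$ converges in probability to $c_K\int|p(e^{-i\vartheta})|^2\,\mathrm{GMC}_1(d\vartheta)$, by substituting the appropriately normalized $e^{2\Re G^{(K)}_n(\vartheta)}$ for $\mathrm{GMC}_1$ via Proposition \ref{GMCcritical} (handled in Section \ref{proofconvergencechaospsi}).

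The main obstacle is step (b). The localization scale $1/n$ of the kernel $\kappa^{(n)}_K$ is precisely the finest oscillation scale of the truncated field $G^{(K)}_n$, so the random field and the localizing kernel are tightly coupled; the purely Fourier-truncated exponential does not converge to $\mathrm{GMC}_1$ on the scale required to integrate against $\kappa^{(n)}_K$, and one must exploit the mixed Fourier-truncated / Poisson-harmonic approximation of $\mathrm{GMC}_1$ built in Section \ref{approxchaos} to disentangle the field from the kernel. The specific choice of dyadic cutoffs at $Kn/L$ is precisely what allows the Fourier truncation scale of $G^{(K)}_n$ to be kept below the kernel scale, so that Proposition \ref{GMCcritical} can be invoked block by block to yield the GMC limit; the final assertion $b(L) \to 1$ then follows by checking that the combinatorial constants $c_K$ sum to $1$ in the limit, the residual mass coming from the extremal block $K = L-1$ being shown to be negligible as $L \to \infty$.
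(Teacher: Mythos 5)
Your overall skeleton---martingale decomposition over blocks $K$, a martingale CLT, and the two inputs (conditional variance convergence and an asymptotic negligibility condition)---matches the paper's Section~\ref{martingaleCLT}, and your Fourier/Parseval reduction of the conditional variance together with the Dickman and mixed-truncation computations correctly anticipates what Sections~\ref{proofconvergencechaospsi} and~\ref{prooflimitDickman} actually do.

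The genuine gap is in your step~(ii). You claim that for conditionally Gaussian increments the Lindeberg condition ``reduces to tightness of each $(\log n)^{1/2}V^{(n)}_K$.'' That is not correct: tightness only bounds the blocks, while any Lindeberg-type condition requires the individual increments to be \emph{asymptotically negligible}. At the block level the $M_K^{(n)}$ are of the same order as the full sum---indeed $(\log n)^{1/2}V^{(n)}_K$ converges to a nondegenerate random variable---so the $L^4$ Lindeberg condition for the CLT you cite (Theorem~4.2 of \cite{NPS23}, restated from \cite{HH80}) simply fails at that scale. If you instead descend to the finest natural scale, treating the individual terms $\mathcal{N}_q q^{-1/2}\sum_r a_r c_{n+r-q,Kn/L}$ as increments, the paper notes explicitly that the $L^4$ Lindeberg condition \emph{still} fails: the relevant sum behaves like $(\log n)\,n\sum_{q> n/L} q^{-2}$, which does not vanish. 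The missing idea is the artificial refinement of the filtration that the paper introduces: writing $\mathcal{N}_q = q^{-1/2}\sum_{t=1}^q \mathcal{N}_{q,t}$ with i.i.d.\ standard complex Gaussians $\mathcal{N}_{q,t}$ turns each term into $q$ smaller increments and replaces $q^{-2}$ by $q^{-3}$ in the Lindeberg sum, which then tends to zero. Without this splitting the hypotheses of the cited CLT cannot be verified, so as written your step~(ii) does not go through. (An alternative that would repair your block-level argument is to bypass the generic martingale CLT and compute the characteristic function directly by iterating conditional expectations, which is legitimate for finitely many exactly conditionally Gaussian increments and needs no Lindeberg condition at all---but that is a different argument than the one you invoke.)
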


These two propositions imply Theorem \ref{mainmultidim}. 
Indeed, for 
$$V :=\left( \int_0^{2 \pi} 
|p(e^{-i \vartheta})|^2 \mathrm{GMC}_1 (d \vartheta) \right)^{1/2} \mathcal{Z},$$
$$W_{n, L} := \sum_{r = 0}^d a_r c^{(L)}_{n,r,g},
$$
and for a $K$-Lipschitz function $\Phi$ from $\mathbb{C}$ to $\mathbb{R}$ with modulus bounded by $C > 0$, 
\begin{align*} &  \left|  \mathbb{E} [ \Phi (X_n(p) (\log n)^{1/4}) ] - \mathbb{E} [ \Phi(V)]  \right|
\\ &   \leq |\mathbb{E} [\Phi (W_{n,L} (\log n)^{1/4}) ] - \mathbb{E} [ \Phi(b(L) V) ]| 
\\ &   + \mathbb{E} [ |\Phi(W_{n,L}(\log n)^{1/4}) - \Phi(X_n(p) (\log n)^{1/4})| ] + \mathbb{E} [| \Phi(V) - \Phi(b(L) V)|]
\\ & \leq |\mathbb{E} [\Phi(W_{n,L} (\log n)^{1/4}) ] - \mathbb{E} [ \Phi(b(L) V) ]| 
\\ &   + \mathbb{E} \left[\min\left( K \sum_{r = 0}^d 
|a_r| |c^{(L)}_{n,r,b}|(\log n)^{1/4},  2 C\right) \right] + \mathbb{E} [\min ( K |1 - b(L)| |V|, 2C)]
\end{align*}
From Proposition \ref{goodsum}, for $L \geq 2$ large enough, 
\begin{align*} & \underset{n \rightarrow \infty}{\lim \sup} 
\left|  \mathbb{E} [ \Phi (X_n(p)(\log n)^{1/4}) ] - \mathbb{E} [ \Phi(V)]  \right| 
\\ & \leq  
   \underset{n \rightarrow \infty}{\lim \sup} \, \mathbb{E} \left[\min\left( K \sum_{r = 0}^d 
|a_r| |c^{(L)}_{n,r,b}|(\log n)^{1/4},  2 C\right) \right] \\ &  + \mathbb{E} [\min ( K |1 - b(L)| |V|, 2C)]
\end{align*}
Since the left-hand side does not depend on $L$, we can take an upper limit in $L$: 
\begin{align*} & \underset{n \rightarrow \infty}{\lim \sup} 
\left|  \mathbb{E} [ \Phi (X_n(p) (\log n)^{1/4}) ] - \mathbb{E} [ \Phi(V)]  \right| 
\\ & \leq  
  \underset{L \rightarrow \infty}{\lim \sup} \,   \underset{n \rightarrow \infty}{\lim \sup} \, \mathbb{E} \left[\min\left( K \sum_{r = 0}^d 
|a_r| |c^{(L)}_{n,r,b}|(\log n)^{1/4},  2 C\right) \right] \\ &  +   \underset{L \rightarrow \infty}{\lim \sup} \,  
 \mathbb{E} [\min ( K |1 - b(L)| |V|, 2C)]
 \\ & \leq  \underset{L \rightarrow \infty}{\lim \sup} \,   \underset{n \rightarrow \infty}{\lim \sup} \, \left(2C  \, \mathbb{P} \left(  \sum_{r = 0}^d 
|a_r| |c^{(L)}_{n,r,b}| (\log n)^{1/4} \geq \varepsilon \right)  + K \varepsilon \right)
\\ &   +   \underset{L \rightarrow \infty}{\lim \sup} \,  
 \mathbb{E} [\min ( K |1 - b(L)| |V|, 2C)]
\end{align*}
for all $\varepsilon > 0$. 
From Proposition \ref{badsum}, 
$$ \underset{n \rightarrow \infty}{\lim \sup} 
\left|  \mathbb{E} [ \Phi (X_n(p) (\log n)^{1/4}) ] - \mathbb{E} [ \Phi(V)]  \right| 
\leq   K \varepsilon 
  +   \underset{L \rightarrow \infty}{\lim \sup} \,  
 \mathbb{E} [\min ( K |1 - b(L)| |V|, 2C)].$$
By dominated convergence, the upper limit in the right-hand sides is equal to zero: 
$$\underset{n \rightarrow \infty}{\lim \sup} 
\left|  \mathbb{E} [ \Phi (X_n(p) (\log n)^{1/4}) ] - \mathbb{E} [ \Phi(V)]  \right| 
\leq   K \varepsilon.$$
Since this is true for all $\varepsilon > 0$, we deduce that the last upper limit is equal to zero, which shows the convergence in distribution 
corresponding to Theorem \ref{mainmultidim}.  Moreover, 
Theorem \ref{mainmultidim} immediately 
implies Theorem \ref{main}, provided that we check that $(\mathcal{M}_1)^{-1}$
is an exponential variable of mean $\pi$. This is a consequence of the following results, summarized in \cite{L24}: 
\begin{itemize}
\item As proven in \cite{JS17}, the measure $\mathrm{GMC}_1$, which corresponds to the 
Seneta-Heyde construction of the critical multiplicative chaos, 
has the same distribution as $\sqrt{2/\pi}$ times the 
random measure $\mu'$ obtained by constructing the critical multiplicative chaos via the derivative martingale, as 
in \cite{DRSV14a}. 
\item As proven in \cite{APS19} and \cite{P21},  
the measure $2 \mu'$ has the same distribution as 
the limit of $\mu_{\gamma}/(\sqrt{2} - \gamma)$ when $\gamma$ tends to $\sqrt{2}$ from below, $\mu_{\gamma}$ being the subcritical Gaussian multiplicative chaos obtained 
from exponentiation of $\gamma$ times 
a field on the unit circle with covariance $(z,z') \mapsto - \log |z - z'|$: notice that the critical exponent $\gamma$ is $\sqrt{2}$ with this normalization. 
\item As proven in \cite{R20}, the total mass of $\mu_{\gamma}$ is the power $-\gamma^2/2$ of 
a standard exponential variable $\mathcal{E}$, divided by $\Gamma (1-\gamma^2/2)$. 
\end{itemize}
From these three properties, we deduce that $\mathcal{M}_1$ has the same distribution as the total mass of 
$\sqrt{2/\pi} \mu'$, which itself is the limit in distribution, when $\gamma$ tends to $\sqrt{2}$ from below, of $\sqrt{2/\pi} /2 $ times the total mass of $\mu_{\gamma}/(\sqrt{2} - \gamma)$, i.e. 
$$\underset{\gamma \uparrow \sqrt{2}}{\lim} 
\frac{1}{\sqrt{2 \pi}} \, \frac{\mathcal{E}^{-\gamma^2/2}}{\Gamma(1-\gamma^2/2) (\sqrt{2}- \gamma)}
= \underset{\gamma \uparrow \sqrt{2}}{\lim} 
\frac{1}{\sqrt{2 \pi}} \, \frac{\mathcal{E}^{-1}(1- \gamma^2/2)}{\sqrt{2} - \gamma} = \frac{\mathcal{E}^{-1}}{\sqrt{\pi}}. 
$$
Hence, $\mathcal{M}_1^{-1}$ has the same distribution as $\mathcal{E} \sqrt{\pi}$, i.e. an exponential variable of mean $\sqrt{\pi}$. 
We have now deduced Theorem \ref{main} and Theorem \ref{mainmultidim} from 
Propositions \ref{badsum} and \ref{goodsum}, which are proven in the  next sections.  
\section{Proof of Proposition \ref{badsum}} \label{proofbad}
We introduce the filtration $(\mathcal{F}_k)_{k \geq 0}$, where $\mathcal{F}_k$ is the $\sigma$-algebra generated by $(\mathcal{N}_j)_{1 \leq j \leq k}$. 
For $n \geq 1$, we define 
$q := \lfloor n/ (1 + \log n) \rfloor$, which implies
that $1 \leq q \leq n$. 
For all $\varepsilon> 0$, $r \geq 0$,
\begin{align*}
\mathbb{P} [ | c^{(L)}_{n,r,b} (\log n)^{1/4} | \geq \varepsilon] & = \mathbb{E} \left[ \mathbb{P} ( | c^{(L)}_{n,r,b} (\log n)^{1/4} | \geq \varepsilon  \; | \mathcal{F}_q ) \right ]
\\ & \leq \mathbb{E} \left[ \min \left(1, \varepsilon^{-2}  (\log n)^{1/2} \mathbb{E} [|c^{(L)}_{n,r,b} |^2 | \mathcal{F}_q ]\right) \right]
\end{align*}
using Markov inequality and trivial bound of $1$ for the conditional probability.  
Hence, for any $\delta > 0$,
$$\mathbb{P} [ | c^{(L)}_{n,r,b} (\log n)^{1/4} | \geq \varepsilon] \leq   \mathbb{P} ( 
(\log n)^{1/2} \mathbb{E} [|c^{(L)}_{n,r,b} |^2 | \mathcal{F}_q ] \geq \delta) + \delta \varepsilon^{-2} 
$$
It is then enough to prove, for all $\delta > 0$, 
$$ \underset{L \rightarrow \infty}{\lim \sup} \; 
\underset{n \rightarrow \infty}{\lim \sup} \;
 \mathbb{P} ( 
(\log n)^{1/2} \mathbb{E} [|c^{(L)}_{n,r,b} |^2 | \mathcal{F}_q ] \geq \delta) = 0 $$
since this implies
$$ \underset{L \rightarrow \infty}{\lim \sup} \; 
\underset{n \rightarrow \infty}{\lim \sup} \; \mathbb{P} [ |c^{(L)}_{n,r,b} (\log n)^{1/4} | \geq \varepsilon] 
\leq \delta \varepsilon^{-2}$$
for all $\varepsilon, \delta > 0$. 

The compositions of $n+r$ in $S_{n,r,b}^{(L)}$ are those satisfying one of the following 
properties: 
\begin{itemize}
\item For all $k > n/L$, $m_k = 0$. 
\item There exists 
an integer $K$, $1 \leq K \leq L-1$
such that the largest $k$ with $m_k > 0$ is 
in $(Kn/L,  (K+1)n/L]$, and either $m_k \geq 2$
for this value of $k$, or the second largest $k$ with $m_k > 0$ is in 
$(Kn/L,  (K+1)n/L]$.
\item There exists $k$ such that $m_k \geq 1$
and $n+1 \leq k \leq n+r$. 
\end{itemize}
We have $q < n/L$ for $L$ fixed and $n$ large enough. In this case,  
giving a composition in $S_{n,r,b}^{(L)}$
is equivalent to giving an integer $s$, 
$0 \leq s \leq n+r$, a composition 
in the set $S_{s,(q)}$ of sequences $(m_k)_{1 \leq k \leq q}$ of nonnegative integers such that $\sum_{k = 1}^q k m_k = s$, 
and a composition in the set $T_{n,r,s,q}^{(L)}$
of sequences $(m_k)_{k \geq 1}$ of nonnegative integers such that 
$\sum_{k \geq 1} k m_k = n+r-s$, 
$m_k = 0$ for $1 \leq k \leq q$, 
and satisfying one of the three properties listed above. 
We write 
$$c_{n,r,b}^{(L)} =   \sum_{0 \leq s \leq n+r, m  \in S_{s,(q)}, m' \in  T_{n,r,s,q}^{(L)} } 
 \prod_{k=1}^q  \frac{\mathcal{N}_k^{m_k}}{ m_k! k^{m_k/2}}
  \prod_{k=q+1}^{n+r}  \frac{\mathcal{N}_k^{m'_k}}{ m'_k! k^{m'_k/2}}
$$
Multiplying by the conjugate, we get 
a sum on $s_1, s_2, m_1, m_2, m'_1, m'_2$. 
Taking the conditional expectation with respect to $\mathcal{F}_q$ can give non-zero terms only when $m'_1 = m'_2$, which in particular implies $s_1 = s_2$. We then get 
\begin{align*} & \mathbb{E}[|c_{n,r,b}^{(L)}|^2 | \mathcal{F}_q] \\ & = \sum_{0 \leq s \leq n+r, m_1, m_2  \in S_{s,(q)}, m' \in  T_{n,r,s,q}^{(L)} } \prod_{k=1}^q  \frac{\mathcal{N}_k^{(m_1)_k}}{ (m_1)_k! k^{(m_1)_k/2}}
\prod_{\ell=1}^q  \frac{\overline{\mathcal{N}_\ell}^{(m_2)_\ell}}{ (m_2)_\ell! \ell^{(m_2)_\ell/2}} \prod_{k=q+1}^{n+r}  \frac{1}{ m'_k! k^{m'_k}}. 
\end{align*}
If we define 
\begin{equation} c_{s,q} := \sum_{m \in S_{s, (q)} } 
\prod_{k=1}^q  \frac{\mathcal{N}_k^{m_k}}{ m_k! k^{m_k/2}}, \label{csqexpansion}
\end{equation}
we get 
$$\mathbb{E}[|c_{n,r,b}^{(L)}|^2 | \mathcal{F}_q] = \sum_{s = 0}^{n+r} 
|c_{s,q}|^2  \sum_{m \in  T_{n,r,s,q}^{(L)}}
\prod_{k=q+1}^{n+r}  \frac{1}{ m_k! k^{m_k}}
$$
and then 
$$\mathbb{E}[|c_{n,r,b}^{(L)}|^2 | \mathcal{F}_q] = \sum_{s= 0}^{n+r} 
|c_{s,q}|^2 \mathbb{P}  (E_{n,r,s,q}^{(L)})
$$
where $E_{n,r,s,q}^{(L)}$ is the event that a uniform permutation $\sigma$ on $\mathfrak{S}_{n+r-s}$ has no cycle of length smaller than or equal to $q$,
and that one of the following properties is satisfied: 
\begin{itemize}
\item $\sigma$ has no cycle of length larger than $n/L$.
\item The two largest cycle lengths of $\sigma$, counted with multiplicity, are in the interval $( Kn/L, r+(K+1)n/L]$ for the same integer $K$, $1 \leq K \leq L-1$. 
\item $\sigma$ has a cycle of length larger than $n$.
\end{itemize}
For any integer $K_0$ such that $1 \leq K_0 \leq L-1$, the following holds: if the event $E_{n,r,s,q}^{(L)}$ occurs, the permutation $\sigma$ has no cycle of length at most $q$, and one of the following properties holds: 
\begin{itemize}
\item $\sigma$ has no cycle of length larger than $K_0 n/L$.
\item There exists $\ell > K_0 n/L$ such that $\sigma$
has two cycles of length $\ell$. 
\item There exist $\ell, \ell' > K_0 n/L$, $0 < \ell'- \ell \leq n/L$ such that $\sigma$ has both cycles of length $\ell$ and $\ell'$. 
\item $\sigma$ has a cycle of length larger than $n$.
\end{itemize}
Let us assume that $s \leq n/2$, which implies that 
the order $n+r-s$ of the permutations involved is at least $n/2$. 

The Feller coupling implies that 
the joint distribution of the cycle lengths of a uniform permutation  
on $\mathfrak{S}_{n+r-s}$ is 
the same as the joint distribution of the 
spacing between $1$'s
in a sequence of independent Bernoulli
variables $((\xi_j)_{1 \leq j \leq n+r-s}, 1)$, 
where $\xi_j$ has parameter $1/j$. 
Having no spacing between $1$'s of length at most $q$ implies that all Bernoulli variables of index $2$ to $q+1$ are equal to zero. Having no spacing of length larger than $K_0 n/L$ implies that each block 
of $ \lfloor K_0 n/L \rfloor$ consecutive Bernoulli variables of index between $1$ and $n+r-s$, and in particular between $n/4$ and $n/3$, has at least one variable equal to $1$. 
For $n$ large enough, $n/4 > q + 1$, so this event is independent of the event 
concerning $\xi_j$ for $2 \leq j \leq q+1$. 
For each block, the probability to have at least a Bernoulli variable equal to $1$ 
is at most the sum of the parameters of the Bernoulli variables involved, and then at most 
$( K_0 n/L)/(n/4) =  4 K_0/L \leq 1/2$ if 
$K_0 \leq L/8$.  For $L$, $K_0$ fixed and $n$ large enough, we can find a number of disjoint block at least equal to 
$$ \frac{n/3 - n/4 + \mathcal{O}(1)}{ K_0 n/L} - 1 \geq 
 \frac{L}{13 K_0} - 1 \geq \frac{L}{26 K_0}  $$
if $K_0 \leq L/26$. The probability 
to have no Bernoulli variable of index $2$ to $q+1$ equal to $1$ is a telescopic product equal to $1/(q+1)$. 
By independence, for $L, K_0, r$ fixed, $K_0 \leq L/26$, $n$ large enough and $s \leq n/2$, the probability that a uniform permutation on $\mathfrak{S}_{n+r-s}$ has no cycle of length at most $q$ or larger than $K_0n/L$ is at most 
$q^{-1} 2^{- L/26K_0} $. 

For $1 \leq \ell < \ell'$, $ \ell + \ell' \leq n+r-s$, we now consider the probability that a uniform permutation of order $n+r-s$ has at least a cycle of length $\ell$ and a cycle of length $\ell'$, and no cycle of length up to $q$. There are 
$${n+r-s \choose \ell} (\ell-1)!
= \frac{(n+r-s)!}{(n+r-s-\ell)! \ell} $$
possible cycles of length $\ell$, and for each of these cycles, 
$$ \frac{(n+r-s- \ell)!}{(n+r-s-\ell- \ell')! \ell'} $$
possible cycles of length $\ell'$ on the complement of the support of the $\ell$-cycle. We then get 
$$\frac{(n+r-s)!}{(n+r-s-\ell - \ell')! \ell \ell'}$$
pairs of cycles. For each of these pairs, 
the probability that they appear in a uniform permutation of order $n+r-s$ and that there is no cycle of length up to $q$ is at most equal to 
$  (n+r-s-\ell-\ell')!/(n+r-s)!$ times the probability that a random permutation of 
$n+r-s-\ell -\ell'$ integers has no cycle of length up to $q$: the first factor is due to the fact that the image of $\ell + \ell'$ elements by the permutation is fixed by the presence of the $\ell$-cycle and the $\ell'$-cycle which are considered. The probability that there is no cycle of length up to $q$ is bounded by $1/(q+1)$ for any order of the permutation except zero. Hence, the probability that 
there are cycles of length $\ell$ and $\ell'$ and no cycle of length at most $q$ is bounded by $1/(q \ell \ell')$, except when $\ell + \ell' = n+r-s$, in which case it is bounded by $1/(\ell \ell')$. 
We have a similar bound when we consider two cycles of equal order $\ell = \ell'$: in fact we even gain a factor $2$ because of the symmetry between the two cycles. 

Moreover, if a permutation of order $n+r-s$ has a cycle of 
length larger than $n$, the other cycles 
have length smaller than $r$, and then smaller than $q$ for $r$ fixed and $n$ large enough. 
Hence, a permutation of order $n+r-s$ with a cycle of length larger than $n$ and no cycle of length up to $q$ is necessarily a cyclic permutation of order larger than $n$: the corresponding event occurs with probability smaller than $1/n$.

Adding all the estimates above, we deduce, for 
$L$, $K_0$, $r$ fixed, $K_0 \leq L/26$, $n$ large enough and $s \leq n/2$, 
\begin{align*}\mathbb{P} (E_{n,r,s,q}^{(L)})
 & \leq q^{-1} 2^{-L/26K_0} 
+  \sum_{\ell, \ell' \geq K_0 n/L, \,
0 \leq \ell' - \ell \leq n/L  } \frac{1}{q \ell \ell'} \\ & +\sum_{\ell, \ell' \geq K_0 n/L, \, 
0 \leq \ell' - \ell \leq n/L  , \ell + \ell' = n+r-s} \frac{1}{\ell \ell'} + \frac{1}{n}.
\end{align*}
 
The first sum is bounded by 
\begin{align*} \sum_{\ell, \ell' \geq K_0 n/L, \,
0 \leq \ell' - \ell \leq n/L} \frac{1}{q \ell^2} & \leq \sum_{\ell \geq K_0 n/L} \frac{(n/L  + 1)}{q \ell^2} 
 \\ & \leq  2\sum_{\ell \geq K_0 n/L} \frac{(n/L +1)}{q \ell(\ell+1)} 
\leq 2 q^{-1}(n/L+1) (K_0 n/L)^{-1}
\end{align*}
an then 
by $4 q^{-1} K_0^{-1}$ when $n \geq L$. 
In the sum involving the condition $\ell + \ell' = n+r-s$, $\ell'$ is necessarily at least equal to $(n+r-s)/2 \geq n/4$ (recall that we assume $s \leq n/2$ here), and then 
$\ell \geq n/4 - n/L  \geq n/8$
if $L \geq 8$.  Each term $1/\ell \ell'$ is then bounded by $32/n^2$. Moreover, the average of $\ell$ and $\ell'$ is fixed and their difference is at most $n/L$, which shows that $\ell$ and $\ell'$ are in an interval of size $\mathcal{O} (1 + n/L)$ and that there is at most one choice of $\ell'$ for each choice of $\ell$. We then get $\mathcal{O} (1 + n/L)$ terms $\mathcal{O}(n^{-2})$ in the last sum, which gives for $L$, $K_0$, $r$ fixed, $K_0 \geq 1$,  $L \geq 26 K_0$, $n \geq L$ large enough, and $s \leq n/2$, 
\begin{align*}\mathbb{P} (E_{n,r,s,q}^{(L)}) 
& \ll q^{-1} 2^{-L/26K_0} 
+ q^{-1} K_0^{-1} +   (nL)^{-1} + n^{-1}
\\ & \ll q^{-1} (2^{-L/26 K_0} +  K_0^{-1})
\end{align*}
since    for $n \geq e^{K_0}$, 
$$n \geq q \log n \geq qK_0.$$
Letting $K_0$ equal to the integer part of 
$ L/ (100 \log L)$, we get for $r$ fixed,
$L \geq 2$ large enough, $n \geq L$ large enough for a given value of $L$, and $s \leq n/2$, 
$$\mathbb{P} (E_{n,r,s,q}^{(L)})  \ll q^{-1}  (\log L)/L 
$$
Bounding the probability by one in the case where $s > n/2$, we deduce 
$$\mathbb{E}[|c_{n,r,b}^{(L)}|^2 | \mathcal{F}_q] \ll (Lq)^{-1}( \log L) \sum_{s = 0}^{\infty}
|c_{s,q}|^2 
+ \sum_{s = \lfloor n/2 \rfloor + 1}^{\infty} |c_{s,q}|^2 
$$
for $r$ fixed, $L$ large enough and for $n$ large enough when $L$ is given. 
It is then sufficient to prove that  
$$ \underset{L \rightarrow \infty}{\lim \sup} \,  \underset{n \rightarrow \infty}{\lim \sup} \, \mathbb{P} \left((Lq)^{-1}( \log L)(\log n)^{1/2} \sum_{s= 0}^{\infty}
|c_{s,q}|^2  \geq \delta \right) = 0 
$$ 
and 
$$ \underset{L \rightarrow \infty}{\lim \sup} \,  \underset{n \rightarrow \infty}{\lim \sup} \, \mathbb{P} \left((\log n)^{1/2} \sum_{s = \lfloor n/2 \rfloor + 1}^{\infty} |c_{s,q}|^2\geq \delta \right) = 0 $$
for all $\delta > 0$. 
The first double upper limit is a direct consequence of the fact that the family of random variables 
$$ \left( q^{-1} (\log q)^{1/2}  \sum_{s = 0}^{\infty}
|c_{s,q}|^2 \right)_{q \geq 1}  $$
is tight, since $\log q$ is equivalent to $\log n$ when $n \rightarrow \infty$. 
This tightness is a direct consequence of Lemma 7.3. of \cite{NPS23}. 

In the second double limit, we can discard the variable $L$. We have to 
 show a convergence in probability to zero, which is implied by the corresponding  convergence in $L^1$: 
$$(\log n)^{1/2} \sum_{s= \lfloor n/2 \rfloor + 1}^{\infty} \mathbb{E} [ |c_{s,q}|^2] \underset{n \rightarrow \infty}{\longrightarrow} 0.$$
The last expectation is equal to the probability that a uniform permutation of order $s$ has no cycle of length larger than $q$, 
or equivalently, the probability 
that there is no spacing larger than $q$ between $1$'s in a sequence of independent Bernoulli random variables 
$((\xi_j)_{1 \leq j \leq s}, 1)$, 
$\mathbb{E}[\xi_j] = 1/j$. 
If this event occurs, each block of $q$ consecutive Bernoulli variables of index between $s/2$ and $s$ has at least a $1$, an event which has probability at most $2q/s$. 
We can construct $s/2q + \mathcal{O}(1)$
disjoint blocks, which shows that 
the probability to have no cycle of length larger than $q$ is at most 
$$(2q/s)^{s/2q + \mathcal{O}(1)} \leq 
(2q/s)^{s/3q}$$
if $s/q$ is large enough. 
We then get 
$$(\log n)^{1/2} \sum_{s= \lfloor n/2 \rfloor + 1}^{\infty} \mathbb{E} [ |c_{s,q}|^2] 
\leq (\log n)^{1/2} \sum_{s > n/2} (2q/s)^{s/3 q} $$
for $n$ large enough, which implies that
$s/q \geq n/2q \gg \log n$ is also large, 
and that $2q/s$ is small.  
For $n$ large enough, we have in particular $2q/s \leq e^{-30}$, and then 
\begin{align*}
(\log n)^{1/2} \sum_{s= \lfloor n/2 \rfloor + 1}^{\infty} \mathbb{E} [ |c_{s,q}|^2] & \leq (\log n)^{1/2} e^{- 5n/q} \sum_{j = 0}^{\infty} e^{-10 j/q} 
\\ & \ll (\log n)^{1/2} e^{-5 \log n} (1 - e^{-10/ q})^{-1} \ll q (\log n)^{1/2}
n^{-5} 
\\ & \ll n^{-4}
\end{align*}
which goes to zero when $n \rightarrow \infty$. 
This completes the proof of Proposition \ref{badsum}. 

\section{Martingale central limit theorem} \label{martingaleCLT}
We now start the proof of Proposition 
\ref{goodsum}. 
The definition of the "good" compositions gives the following equality:
$$\sum_{r = 0}^d 
a_r c_{n,r,g}^{(L)} = \sum_{K = 1}^{L-1} 
\sum_{Kn/L < q \leq (K+1)n/L} \frac{\mathcal{N}_q}{\sqrt{q}} 
\sum_{r =0}^d a_r c_{n+r-q, Kn/L} 
$$
where $c_{n+r-q,Kn/L} := c_{n+r-q, \lfloor Kn/L \rfloor}$. 
The right-hand side of this equality is 
a martingale with respect to the filtration
$(\mathcal{F}_k)_{k \geq 0}$ introduced in the proof of Proposition \ref{badsum}. 
We use a version of the martingale central limit theorem given in Section 3.2 of \cite{HH80}, and restated as Theorem 4.2 of \cite{NPS23}. Notice that in this theorem, the assumption 
that the filtrations involved are nested is omitted by mistake, but it is automatically satisfied in our setting because with the notation of \cite{NPS23}, Theorem 4.2, the filtration $\mathcal{F}_{n,q}$ considered here does not depend on $n$. 

The same reasoning as in \cite{NPS23} and \cite{NPSV25}
shows that Proposition \ref{goodsum} is deduced from a suitable convergence in probability of conditional variance and a $L^4$ Lindeberg-type condition. 
In order to get the Lindeberg-type condition, we split the Gaussian variables $\mathcal{N}_q$ as follows: we 
consider i.i.d. complex Gaussians
$(\mathcal{N}_{q, t})_{q \geq 1, 1 \leq t \leq q}$
with 
$$\mathbb{E} [ \mathcal{N}_{q, t}] 
=\mathbb{E} [ ( \mathcal{N}_{q, t})^2] 
 = 0, \; \mathbb{E} [ |\mathcal{N}_{q, t}|^2] 
 = 1,$$
 and we take the variables $(\mathcal{N}_q)_{q \geq 1}$ in such a way that 
$$\mathcal{N}_q := \frac{1}{\sqrt{q}} 
\sum_{t =1}^q \mathcal{N}_{q,t},$$
which implies 
$$\sum_{r = 0}^d 
a_r c_{n,r,g}^{(L)} = \sum_{K = 1}^{L-1} 
\sum_{Kn/L < q \leq (K+1)n/L} 
\sum_{t = 1}^q \frac{\mathcal{N}_{q,t}}{q} 
\sum_{r =0}^d a_r c_{n+r-q, Kn/L} 
$$
The right-hand side provides a sum of conditionally Gaussian martingale increments
with respect to the filtration generated by 
the variables $(\mathcal{N}_{q,t})_{q \geq 1, 1 \leq t \leq q}$ ordered lexicographically:
for $1 \leq s \leq q$, the $(q(q-1)/2 + s)$-th $\sigma$-algebra is generated by 
$\mathcal{N}_{q',t}$ for $1 \leq t \leq q' < q$ and $\mathcal{N}_{q,t}$ for $1 \leq t\leq s$. 

We want to apply the martingale 
central limit theorem to the martingale just above, multiplied by $(\log n)^{1/4}$. The Lindeberg-type condition is implied by the following convergence, 
for $0 \leq r \leq d$: 
\begin{equation} (\log n) \sum_{K = 1}^{L-1} 
\sum_{Kn/L < q \leq (K+1)n/L} 
\sum_{t = 1}^q \frac{1}{q^4} 
 \mathbb{E} [| c_{n+r-q, Kn/L}|^4] 
 \underset{n \rightarrow \infty}{\longrightarrow} 0. \label{LindebergL4}
 \end{equation}
Comparing the fourth powers of the expansions \eqref{cnexpansion} and \eqref{csqexpansion}, and taking the expectation, we get an expansion
of $\mathbb{E} [| c_{n+r-q, Kn/L}|^4]$
which coincides with an expansion of 
$\mathbb{E} [ |c_{n+r-q}|^4]$ from which 
nonnegative terms have beem removed. 
Hence, 
$$ \mathbb{E} [| c_{n+r-q, Kn/L}|^4] 
\leq \mathbb{E} [| c_{n+r-q}|^4]
= n+r-q + 1 \leq n + r + 1$$
where the last equality is obtained
by taking $\theta = 1$ in equation (2.8) of \cite{NPS23}. 
The left-hand side of \eqref{LindebergL4}
is then bounded by 
$$(\log n) (n+r+1) 
\sum_{q > n/L} \frac{1}{q^3}
\leq (\log n) (n+r+1) 
\left( (n/L)^{-3}  + \int_{n/L}^{\infty}
u^{-3} du \right),
$$
which tends to zero when $n \rightarrow \infty$, proving  \eqref{LindebergL4}. 
Notice that Lindeberg $L^4$ condition would have not been satisfied without 
splitting $\mathcal{N}_q$ in terms of $\mathcal{N}_{q,t}$, because $1/q^3$ would have been replaced by $1/q^2$ in the computation above: this issue didn't not appear in the subcritical phase. 

Once Lindeberg-type condition is satisfied, we see, following the proof of Corollary 4.3 of \cite{NPS23}, that 
the convergence stated in Proposition \ref{goodsum}
follows 
from the following convergence in probability: 
$$ (\log n)^{1/2}  \sum_{K=1}^{L-1} 
\sum_{Kn/L < q \leq (K+1)n/L}  \frac{1}{q}
  \left| \sum_{r = 0}^d a_r c_{n+r -q,Kn/L} \right|^2
  \underset{n \rightarrow \infty}{
\longrightarrow} 
(b(L))^2 \mathrm{GMC}_1 (p) $$
where 
$$ \mathrm{GMC}_1 (p) := 
\int_0^{2 \pi} |p(e^{-i \vartheta})|^2 \mathrm{GMC}_1 (d \vartheta)$$
and $b(L)$ is deterministic, tending to $1$ when $L \rightarrow \infty$. 

The sum in $q$ in the left-hand side of the convergence can be written, 
for $u = Kn/L$, as
$s = n - q = Lu/K - q$, 

\begin{align*} &  \frac{1}{u} 
\sum_{(L-K-1) u/K \leq s < (L-K) u/K } 
\frac{1}{L/K - s/u} \left| \sum_{r = 0}^d 
a_r c_{s + r, u} \right|^2 
\\ & = \frac{1}{u}
\sum_{s \geq 0} \psi_{L,K} (e^{-s/u})
 \left| \sum_{r = 0}^d 
a_r c_{s + r, u} \right|^2 
 \end{align*}
 where $\psi_{L,K}$ is the function from 
 $[0,1]$ to $\mathbb{R}$ given by 
 $$\psi_{L,K} (x) = 
 \frac{\mathds{1}_{e^{-(L-K)/K} < x \leq e^{-(L-K-1)/K}}}{L/K + \log x}. 
 $$
Summing convergence in probability below 
for all values of $K$ between $1$ and $L-1$ and using the fact that $\log (Kn/L)$ is equivalent of $\log n$ for $L, K$ fixed and $n \rightarrow \infty$,  one deduce that Proposition \ref{goodsum} is a consequence of the following propositions: 
\begin{proposition} \label{convergencechaospsi}
For all integers $K, L$ such that $1 \leq K \leq L-1$, 
$$u^{-1} (\log u)^{1/2}
\sum_{s \geq 0} \psi_{L,K} (e^{-s/u})
 \left| \sum_{r = 0}^d 
a_r c_{s + r, u} \right|^2 
\underset{u \rightarrow \infty}{\longrightarrow} e^{\gamma_E} \mathbb{E}[ \psi_{L,K} (e^{-X})] \mathrm{GMC}_1(p)$$
in probability, where $\gamma_E$ is 
Euler-Mascheroni constant, 
and $X$ is a positive random variable 
with density proportional to the Dickman function. 
Recall that the Dickman function $\rho$ is the unique function from $[0, \infty)$ to $\mathbb{R}_+$ which is  
equal to $1$ on $[0,1]$, 
differentiable on $[1, \infty)$ and 
satisfies the delay differential equation 
$$x \rho'(x) + \rho(x-1) = 0. $$

\end{proposition}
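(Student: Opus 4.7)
The plan is to reduce the claim to the convergence of Laplace transforms via Parseval on the disc, and then to extend to the specific test function $\psi_{L,K}(e^{-\cdot})$ by a Portmanteau-style argument.

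Fix $a>0$ and set $r=r_a:=e^{-a/(2u)}$, so that $r^{2s}=e^{-as/u}$. Writing $F_u(z):=\exp(G_u^{\mathbb C}(z))=\sum_k c_{k,u}z^k$ with $G_u^{\mathbb C}(z):=\sum_{k=1}^u z^k\mathcal{N}_k/\sqrt{k}$, a direct rearrangement of the product gives
\[
p(e^{-i\vartheta})\,F_u(re^{i\vartheta})=\sum_{n\ge -d}r^n\,\tilde D_{n,u}(r)\,e^{in\vartheta},\qquad \tilde D_{n,u}(r):=\sum_{j=0}^d a_j\,c_{n+j,u}\,r^j,
\]
and Parseval yields $\int_0^{2\pi}|p(e^{-i\vartheta})|^2|F_u(re^{i\vartheta})|^2\,d\vartheta/(2\pi)=\sum_{n\ge -d}r^{2n}|\tilde D_{n,u}(r)|^2$. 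Because $|r_a^j-1|=O(1/u)$ uniformly in $0\le j\le d$, the differences $|\tilde D_{n,u}(r_a)-D_{n,u}|$ (with $D_{n,u}:=\sum_j a_j c_{n+j,u}$) and the $O(1)$ boundary contribution from $n<0$ are both negligible after multiplication by $u^{-1}(\log u)^{1/2}$. Proposition~\ref{GMCcritical} applies with $(n,r)=(u,r_a)$ since $\min(u,(1-r_a^2)^{-1})\asymp u\to\infty$, giving $\mu_{u,r_a}(|p(e^{-i\cdot})|^2)\to\mathrm{GMC}_1(p)=:M$ in probability. A short computation—write the sum as $H_u$ minus a Riemann sum approximating $\int_0^1(1-e^{-ax})/x\,dx$, then substitute $t=ax$—gives
\[
\frac{V_u(r_a)}{2}=\sum_{\ell=1}^u\frac{e^{-a\ell/u}}{\ell}=\log u+\gamma_E+J(a)+o(1),\qquad J(a):=\int_0^a\frac{e^{-t}-1}{t}\,dt.
\]
Plugging this into the definition of $\mu_{u,r_a}$ and combining with the Parseval identity, one obtains
\[
u^{-1}(\log u)^{1/2}\sum_{s\ge 0}e^{-as/u}|D_{s,u}|^2\longrightarrow e^{\gamma_E+J(a)}M\qquad\text{in probability.}
\]
Finally, Laplace-transforming the delay equation $x\rho'(x)+\rho(x-1)=0$ yields $s\hat\rho'(s)=(e^{-s}-1)\hat\rho(s)$; integrating with $\hat\rho(0)=e^{\gamma_E}$ gives $\hat\rho(a)=e^{\gamma_E+J(a)}$, so $\mathbb{E}[e^{-aX}]=e^{J(a)}$ and the right-hand side above is exactly $e^{\gamma_E}\mathbb{E}[e^{-aX}]\,M$.

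For the extension, regard $\nu_u:=u^{-1}(\log u)^{1/2}\sum_{s\ge 0}|D_{s,u}|^2\,\delta_{s/u}$ as a random Radon measure on $[0,\infty)$. The above shows $\int e^{-ax}\,d\nu_u\to e^{\gamma_E}\mathbb{E}[e^{-aX}]M$ in probability for every $a\ge 0$; the $a=0$ case gives tightness of the total mass, and the tail bound $\mathbb{E}[|c_{s,u}|^2]\le(2u/s)^{s/(3u)}$ (used already in Section~\ref{proofbad}) provides tightness on $[T,\infty)$ for large $T$. A Prokhorov-type subsequence argument, together with the fact that the Laplace transform uniquely determines finite measures on $[0,\infty)$, upgrades this to weak convergence in probability $\nu_u\to e^{\gamma_E}M\cdot P_X$, where $P_X$ denotes the Dickman law. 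Since $\phi(x):=\psi_{L,K}(e^{-x})$ is bounded and continuous except at the two endpoints of its support—neither of which is an atom of the absolutely continuous law $P_X$—sandwiching $\phi$ between continuous functions and using Portmanteau yields $\int\phi\,d\nu_u\to e^{\gamma_E}M\,\mathbb{E}[\phi(X)]$ in probability, which is the claim.

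The main obstacle will be the first step: the critical normalisation $u^{-1}(\log u)^{1/2}$ is exact, so there is no room for careless error accumulation, and since $M$ has infinite first moment one cannot bound the $\tilde D_{n,u}(r)-D_{n,u}$ corrections or the $o(1)$ in the expansion of $V_u(r_a)/2$ in $L^1$. We expect to need conditional $L^2$ estimates on $|D_{s,u}|^2$, analogous to those implicit in the proof of Proposition~\ref{GMCcritical}, to ensure that all boundary and multiplicative errors genuinely vanish in probability.
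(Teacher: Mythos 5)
Your first half (Parseval on the disc, expansion of $V_u$ via harmonic sums, identification with the Laplace transform of the Dickman distribution) is essentially the paper's argument; the identity $\mathbb{E}[e^{-aX}]=e^{J(a)}$ with $J(a)=\int_0^a (e^{-t}-1)t^{-1}\,dt$ is what the paper obtains implicitly by appealing to the Poisson representation $\frac1u\sum_{\ell\le u}\ell Z_\ell\to X$ from \cite{ABT03}, so that part is interchangeable. Where you genuinely diverge is the extension step. You allow \emph{real} Laplace parameters $a\ge 0$, package the family of convergences as weak convergence in probability of the random measures $\nu_u$ to $e^{\gamma_E}M\cdot P_X$ (tightness + uniqueness of Laplace transforms), and then invoke Portmanteau for the test function $\phi(x)=\psi_{L,K}(e^{-x})$, which is a.e.\ continuous under the Dickman law. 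The paper instead restricts to \emph{integer} $m$, so the established class of test functions is polynomials $t\mapsto t^m$, and then sandwiches $\psi_{L,K}$ between polynomials by Weierstrass, handling the random and potentially large factor $\mathrm{GMC}_1(p)$ by conditioning on $\{\mathrm{GMC}_1(p)\le A\}$ and sending $A\to\infty$ last. The two routes are equivalent in content; yours buys a more conceptual statement (a limit of random measures) at the cost of a Prokhorov-type subsequence argument which you only sketch, while the paper's sandwich is more elementary and requires no weak-convergence machinery for random measures.

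One self-inflicted complication in your version: you keep $p(e^{-i\vartheta})$ fixed in the Parseval identity and therefore incur the correction $\tilde D_{n,u}(r_a)-D_{n,u}$, which you note cannot be handled in $L^1$ because $\mathcal{M}_1$ has infinite mean. The cleaner move (which the paper makes) is to pair $F_u(re^{i\vartheta})$ with $p(r^{-1}e^{-i\vartheta})$ so Parseval gives $\sum_n r^{2n}|D_{n,u}|^2$ directly, pushing the radial dilation into the polynomial; the residual error $\bigl||p(r^{-1}e^{-i\vartheta})|^2-|p(e^{-i\vartheta})|^2\bigr|$ is $o(1)$ uniformly in $\vartheta$, and after multiplying by the total mass $\mu_{u,r}([0,2\pi))$ (which converges in probability by Proposition~\ref{GMCcritical}) Slutsky closes it. If you adopt that pairing, your worried last paragraph disappears and the remaining gap is just the routine (but worth writing out) subsequence/diagonalisation argument for upgrading pointwise-in-$a$ convergence in probability of Laplace transforms to weak convergence in probability of the random measures.
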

\begin{proposition} \label{limitDickman}
For $L \geq 1$ integer, and $X$ following the distribution with density proportional to the Dickman function, 
$$\sum_{K=1}^{L-1} 
\mathbb{E} [ \psi_{L,K} (e^{-X})] 
\underset{L \rightarrow \infty}{\longrightarrow} e^{-\gamma_E}. 
$$
\end{proposition}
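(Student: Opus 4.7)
The plan is to turn the sum into a single integral of the Dickman function and identify the limit by a Laplace-transform calculation. Since the density of $X$ equals $\rho/e^{\gamma_E}$ (by the classical normalisation $\int_0^\infty \rho(x)\,dx = e^{\gamma_E}$) and $\psi_{L,K}(e^{-x}) = \mathds{1}_{(L-K-1)/K \leq x < (L-K)/K}\,(L/K - x)^{-1}$, I would first write
\[
e^{\gamma_E}\,\mathbb{E}[\psi_{L,K}(e^{-X})] = \int_{(L-K-1)/K}^{(L-K)/K} \frac{\rho(x)}{L/K - x}\,dx,
\]
then apply the substitution $v = L - Kx$ to rewrite this as $\int_{L-K-1}^{L-K} \rho(v/K)/(L-v)\,dv$. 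Since the intervals $[L-K-1,L-K]$ for $K=1,\dots,L-1$ tile $[0,L-1]$, summing over $K$ and then setting $u = v/L$ yields
\[
e^{\gamma_E}\sum_{K=1}^{L-1}\mathbb{E}[\psi_{L,K}(e^{-X})] = \int_0^{1 - 1/L}\frac{\rho\bigl(Lu/(L - \lfloor Lu \rfloor - 1)\bigr)}{1-u}\,du.
\]

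Next I would use dominated convergence to show that this integral tends to $\int_0^1 \rho(u/(1-u))/(1-u)\,du$. Pointwise convergence is immediate since $\lfloor Lu\rfloor/L \to u$ for each $u\in(0,1)$. For domination, the inequality $\lfloor Lu\rfloor > Lu-1$ gives $Lu/(L - \lfloor Lu\rfloor - 1) > u/(1-u)$, and since $\rho$ is non-increasing on $[0,\infty)$ (constantly $1$ on $[0,1]$, monotone decreasing on $[1,\infty)$ by the delay equation), the integrand is bounded pointwise by $\rho(u/(1-u))/(1-u)$. The substitution $t = u/(1-u)$ converts $\int_0^1 \rho(u/(1-u))/(1-u)\,du$ into $\int_0^\infty \rho(t)/(1+t)\,dt$, which is finite (and, as I verify below, equals $1$).

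To evaluate $\int_0^\infty \rho(t)/(1+t)\,dt$, I write $1/(1+t) = \int_0^\infty e^{-s(1+t)}\,ds$ and apply Fubini: the integral equals $\int_0^\infty e^{-s}\hat\rho(s)\,ds$, where $\hat\rho(s) := \int_0^\infty \rho(t)e^{-st}\,dt$. From the delay equation $t\rho'(t) = -\rho(t-1)$ (for $t\geq 1$) and $\rho' \equiv 0$ on $(0,1)$, Laplace transformation produces the ODE $s\hat\rho'(s) = (e^{-s}-1)\hat\rho(s)$, which together with $\hat\rho(0) = e^{\gamma_E}$ integrates to $\hat\rho(s) = e^{-E_1(s)}/s$, with $E_1(s) = \int_s^\infty e^{-u}/u\,du$. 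Since $E_1'(s) = -e^{-s}/s$, the substitution $w = e^{-E_1(s)}$ satisfies $dw = (e^{-s}/s)\,w\,ds$, hence
\[
\int_0^\infty e^{-s}\hat\rho(s)\,ds = \int_0^\infty \frac{e^{-s}\,e^{-E_1(s)}}{s}\,ds = \int_0^1 dw = 1,
\]
using $E_1(0^+) = +\infty$ and $E_1(+\infty) = 0$. Dividing by $e^{\gamma_E}$ gives the claim.

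The main obstacle is the final step: one has to recognise the Laplace transform of the Dickman function explicitly and then spot that $(e^{-s}/s)\,e^{-E_1(s)}\,ds$ is an exact differential, which makes $\int_0^\infty e^{-s}\hat\rho(s)\,ds$ telescope to $1$. The dominated convergence argument in the second step is comparatively cheap, provided one uses the monotonicity of $\rho$ to produce a dominating function that is itself the integral one ultimately wants to compute.
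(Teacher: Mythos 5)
Your proof is correct, and the overall structure (reduce to $\int_0^\infty \rho(t)/(1+t)\,dt = 1$, then verify this identity) matches the paper's, but you handle both the sum-to-integral reduction and the final identity differently. For the reduction, you use an exact tiling of $[0,L-1]$ by the intervals $[L-K-1,L-K]$, producing a single integral with a floor function, and then apply dominated convergence directly; the paper instead uses monotonicity of $\rho$ to sandwich the sum between two genuine integrals before passing to the limit. Both work, and yours has the small aesthetic advantage of being an exact identity before the limit is taken. (Minor typo: your substitution should read $v = Kx$, not $v = L-Kx$, to produce $\int_{L-K-1}^{L-K}\rho(v/K)/(L-v)\,dv$.)

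The more substantive divergence is in evaluating $\int_0^\infty \rho(t)/(1+t)\,dt$. You go through the Laplace transform $\hat\rho(s) = e^{-E_1(s)}/s$, which you derive from the first-order ODE for $\hat\rho$, and then recognise $(e^{-s}/s)e^{-E_1(s)}\,ds$ as the exact differential of $e^{-E_1(s)}$, giving $\int_0^\infty e^{-s}\hat\rho(s)\,ds = 1$. This is correct but heavier than necessary. The paper's observation is that the delay equation, shifted by one, reads $(x+1)\rho'(x+1) = -\rho(x)$, so that $\rho(x)/(1+x) = -\rho'(x+1)$ and the integral telescopes in one line to $\rho(1) = 1$. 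Your route costs the full machinery of the exponential integral and the transform identity, whereas the paper's costs a single substitution in the delay equation; the payoff of your version is that you independently re-derive the classical formula for $\hat\rho$, which is a nice sanity check but not needed for the proposition.
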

It remains to prove Propositions \ref{convergencechaospsi} and \ref{limitDickman}, which is done in the two next sections. 
\section{Proof of Proposition \ref{convergencechaospsi}} \label{proofconvergencechaospsi}
Let $m \geq 0$ be an integer. 
By Parseval's formula, we have, when $c_{s+r,u} := 0$ for $s + r < 0$, 
\begin{align*} \sum_{s \in \mathbb{Z}} e^{-sm/u}
&  \left| \sum_{r = 0}^d 
a_r c_{s + r, u} \right|^2 
 = \frac{1}{2 \pi} \int_0^{2 \pi} \left| \sum_{s \in \mathbb{Z}} \sum_{r = 0}^d
a_r c_{s + r, u} e^{-sm/2u} e^{i s \vartheta}\right|^2 \frac{d \vartheta}{2 \pi}
\\ & = \frac{1}{2 \pi} \int_0^{2 \pi} \left| \sum_{s \in \mathbb{Z}} \sum_{r = 0}^d
a_r c_{s, u} e^{-(s-r)m/2u} e^{i (s-r) \vartheta}\right|^2 \frac{d \vartheta}{2 \pi}
\\ & = \frac{1}{2 \pi} \int_0^{2 \pi} \left|
\sum_{r = 0}^d  a_r (e^{m/2u} e^{-i \vartheta})^r  \sum_{s \in \mathbb{Z}} 
 c_{s, u} e^{-sm/2u} e^{is \vartheta}\right|^2 \frac{d \vartheta}{2 \pi}
 \\ & = \frac{1}{2 \pi} \int_0^{2 \pi} |p(e^{m/2u} e^{-i \vartheta})|^2 \exp \left( 2 \Re \sum_{1 \leq k \leq u}
 \frac{(e^{-m/2u} e^{i \vartheta})^k}{\sqrt{k}} \mathcal{N}_k \right)\frac{d \vartheta}{2 \pi}
 \\ & = (V_{u}(e^{-m/2u})/2)^{-1/2} e^{V_{u}(e^{-m/2u})/2} \int_0^{2 \pi} |p(e^{m/2u} e^{-i \vartheta})|^2 \mu_{u, e^{-m/2u}} (d \vartheta), 
\end{align*}
 with the notation of Proposition 
 \ref{GMCcritical}. 
Now,
\begin{align*} e^{V_{u}(e^{-m/2u})/2 }
& = \exp \left( \sum_{1 \leq \ell \leq u}
\frac{e^{-m \ell/u}}{\ell} \right)
\\ & = e^{\log u + \gamma_E + \mathcal{O}(1/u)}\exp \left( \sum_{1 \leq \ell \leq u}
\frac{e^{-m \ell/u} - 1}{\ell} \right)
\\ & = u e^{\mathcal{O}(1/u)} e^{\gamma_E} \mathbb{E}  
\left[ \exp \left(- \frac{m}{u} \sum_{1 \leq \ell \leq u} \ell Z_{\ell} \right)  \right],
\end{align*}
where $(Z_\ell)_{\ell \geq 1}$ are independent Poisson random variables, $Z_{\ell}$ with parameter 
$1/\ell$. 
From \cite{ABT03}, Section 4.2, 
we have 
that 
$$\frac{1}{u} \sum_{1 \leq \ell \leq u} \ell Z_{\ell} \underset{u \rightarrow \infty}{\longrightarrow} X$$
in distribution, where $X$ is a positive random variable with density equal to $e^{-\gamma_E} \rho$. 
We deduce 
$$(V_{u}(e^{-m/2u})/2)^{-1/2} e^{V_{u}(e^{-m/2u})/2}
u^{-1} (\log u)^{1/2}
\underset{u \rightarrow \infty}{\longrightarrow}
e^{\gamma_E} \mathbb{E} [ e^{-m X}]. $$
Hence, 
\begin{align*} & u^{-1} (\log u)^{1/2} \sum_{s \in \mathbb{Z}} e^{-sm/u}
  \left| \sum_{r = 0}^d 
a_r c_{s + r, u} \right|^2 
\\ & =  ( 1 + o_{u \rightarrow \infty}(1) ) e^{\gamma_E} \mathbb{E} \left[ 
e^{-m X} \right]
\int_0^{2 \pi} |p(e^{m/2u} e^{-i \vartheta})|^2 \mu_{u, e^{-m/2u}} (d \vartheta)
\end{align*}
when $u \rightarrow \infty$. 
Now, for $u \geq m$, 
\begin{align*} & \int_0^{2 \pi}  \left| \, |p(e^{m/2u} e^{-i \vartheta})|^2 - |p(e^{-i \vartheta})|^2 \, \right| \mu_{u, e^{-m/2u}} (d \vartheta)
\\ & \leq \sup_{|z_1|, |z_2| \leq e^{1/2}, 
|z_1 - z_2| \leq e^{m/2u}-1} \left|\,|p(z_1)|^2 - |p(z_2)|^2\, \right|
\mu_{u, e^{-m/2u}} ([0, 2\pi))
\end{align*}
When $u \rightarrow \infty$, the supremum on $z_1$ and $z_2$ tends to zero 
by uniform continuity of $p$ on compact sets, 
whereas $\mu_{u, e^{-m/2u}} ([0, 2\pi))$
converges to $\mathrm{GMC}_1([0, 2\pi))$ in probability by Proposition \ref{GMCcritical}. By Slutsky's theorem, 
$$\int_0^{2 \pi}  (|p(e^{m/2u} e^{-i \vartheta})|^2 - |p(e^{-i \vartheta})|^2) \,  \mu_{u, e^{-m/2u}} (d \vartheta)
\underset{u \rightarrow \infty}{\longrightarrow} 0$$
in probability. 
By Proposition \ref{GMCcritical} and 
continuous mapping theorem, 
$$\int_0^{2 \pi} 
|p(e^{-i \vartheta})|^2  \mu_{u, e^{-m/2u}} (d \vartheta) 
\underset{u \rightarrow \infty}{\longrightarrow}
\int_0^{2 \pi} 
|p(e^{-i \vartheta})|^2  \mathrm{GMC}_1 (d \vartheta) $$
in probability. Combining the two last convergences, we deduce 
$$\int_0^{2 \pi} |p(e^{m/2u} e^{-i \vartheta})|^2 \mu_{u, e^{-m/2u}} (d \vartheta)
\underset{u \rightarrow \infty}{\longrightarrow}
\int_0^{2 \pi} 
|p(e^{-i \vartheta})|^2  \mathrm{GMC}_1 (d \vartheta) $$
in probability, and then 
$$u^{-1} (\log u)^{1/2} \sum_{s \in \mathbb{Z}} e^{-sm/u}
  \left| \sum_{r = 0}^d 
a_r c_{s + r, u} \right|^2 
\underset{u \rightarrow \infty}{\longrightarrow}
e^{\gamma_E} \mathbb{E} \left[ 
e^{-m X} \right] \int_0^{2 \pi} 
|p(e^{-i \vartheta})|^2  \mathrm{GMC}_1 (d \vartheta), $$
which implies, by linearity, 
\begin{equation} u^{-1} (\log u)^{1/2} \sum_{s \in \mathbb{Z}} \psi(e^{-s/u})
  \left| \sum_{r = 0}^d 
a_r c_{s + r, u} \right|^2 
\underset{u \rightarrow \infty}{\longrightarrow}
e^{\gamma_E} \mathbb{E} \left[ 
\psi(e^{-X}) \right]  \mathrm{GMC}_1 (p),
\label{convpolynomials}
\end{equation}
for all polynomial functions $\psi$.

For integers $L \geq 1$, $1 \leq K \leq L-1$, 
the function $\psi_{L,K}$ is supported 
on the interval 
$(e^{-(L-K)/K}, e^{- (L-K-1)/K}]$,
which is included in $(e^{-(L-1)}, 1]$, 
and on its support, $\psi_{L,K}$
is continuous and takes values in $[0,1]$, since the denominator of the expression of $\psi_{L,K}$ is at least $L/K - (L-K)/K = 1$. 
Hence, for all $\varepsilon \in (0,1)$, we can bound $\psi_{L,K}$ from above 
and below by nonnegative continuous functions
$\psi_1$ and $\psi_2$, 
supported on $[0,2]$, bounded by $1$, 
and equal to $\psi_{L,K}$ everywhere outside 
intervals of length $\varepsilon$ around the two 
boundaries of the support of $\psi_{L,K}$. 
By Weierstrass approximation theorem, 
the function $\psi_1$ can itself be bounded 
from above by a polynomial function $\psi_+$
such that $\psi_+ - \psi_1 \leq \varepsilon$
on $[0,2]$, and $\psi_2 $ can  be bounded 
from below by a polynomial function $\psi_-$
such that $\psi_- - \psi_2 \geq - \varepsilon$
on $[0,2]$. 
We then get
$$\psi_- \leq \psi_{L,K} \leq \psi_+$$
on the interval $[0,2]$, and 
$$\int_0^2 ( \psi_+ (t)  - \psi_- (t)) dt 
= \mathcal{O}(\varepsilon).$$
From \eqref{convpolynomials} applied to $\psi_-$,
$$\mathbb{P} \left( u^{-1} (\log u)^{1/2} \sum_{s \in \mathbb{Z}} \psi_{L,K}(e^{-s/u})
  \left| \sum_{r = 0}^d 
a_r c_{s + r, u} \right|^2  
\leq e^{\gamma_E} 
\mathbb{E} [ \psi_- (e^{-X})] \mathrm{GMC}_1 (p)
 - \varepsilon \right)
$$
tends to zero when $u \rightarrow \infty$. 
Indeed the sums in $s \in \mathbb{Z}$ involved
can only have non-zero terms for $s \geq -d$, 
which for $u \geq 2d$, implies $e^{-s/u} \in [0, 2]$ and then $\psi_{L,K} (e^{-s/u}) \geq \psi_-(e^{-s/u})$. 
Now, since $X$ has density $e^{-\gamma_E} \rho$, 
\begin{align*}e^{\gamma_E} \mathbb{E} [ \psi_- (e^{-X})]  
& = e^{\gamma_E} \mathbb{E} [ \psi_{L,K} (e^{-X})] 
- \int_0^{\infty} (\psi_{L,K}(e^{-y}) - \psi_- (e^{-y}))
\rho(y) dy
\\ & \geq e^{\gamma_E} \mathbb{E} [ \psi_{L,K} (e^{-X})] - \int_0^1 (\psi_+(t) - \psi_- (t))
\rho(\log (t^{-1})) \frac{dt}{t}.
\end{align*}
Since the Dickman function has a superexponential decay at infinity, $t^{-1} \rho(\log (t^{-1}))$
is uniformly bounded for $t \in (0,1]$. 
We deduce 
$$e^{\gamma_E} \mathbb{E} [ \psi_- (e^{-X})]  
\geq e^{\gamma_E} \mathbb{E} [ \psi_{L,K} (e^{-X})] - \mathcal{O} (\varepsilon),$$
and then 
$$e^{\gamma_E} 
\mathbb{E} [ \psi_- (e^{-X})] \mathrm{GMC}_1 (p)
\geq e^{\gamma_E} 
\mathbb{E} [ \psi_{L,K} (e^{-X})] \mathrm{GMC}_1 (p) - \mathcal{O} (A \varepsilon)
$$
as soon as $\mathrm{GMC}_1 (p) \leq A$, for any $A > 0$. 
We deduce that for all $A, \varepsilon > 0$
$$ \mathbb{P} \left( u^{-1} (\log u)^{1/2}   \sum_{s \in \mathbb{Z}}  \psi_{L,K}(e^{-s/u})  
\left| \sum_{r = 0}^d 
a_r c_{s + r, u} \right|^2  
 \leq e^{\gamma_E} 
\mathbb{E} [ \psi_{L,K} (e^{-X})] \mathrm{GMC}_1 (p)
 - \mathcal{O} ((1 + A) \varepsilon ) \right)$$
has an upper limit at most $\mathbb{P} (\mathrm{GMC}_1(p) \geq A)$ when $u \rightarrow \infty$. 
For any $\delta > 0$, we can apply this result 
to $\varepsilon \in (0, \delta/(1+A))$, and then 
let $A \rightarrow \infty$, which gives that
$$\mathbb{P} \left( u^{-1} (\log u)^{1/2} \sum_{s \in \mathbb{Z}} \psi_{L,K}(e^{-s/u})
  \left| \sum_{r = 0}^d 
a_r c_{s + r, u} \right|^2  
\leq e^{\gamma_E} 
\mathbb{E} [ \psi_{L,K} (e^{-X})] \mathrm{GMC}_1 (p)
 - \mathcal{O} (\delta ) \right)
$$
tends to zero when $u \rightarrow \infty$.
A similar reasoning involving $\psi_+$ instead of $\psi_-$ implies that 
$$\mathbb{P} \left( u^{-1} (\log u)^{1/2} \sum_{s \in \mathbb{Z}} \psi_{L,K}(e^{-s/u})
  \left| \sum_{r = 0}^d 
a_r c_{s + r, u} \right|^2  
\geq e^{\gamma_E} 
\mathbb{E} [ \psi_{L,K} (e^{-X})] \mathrm{GMC}_1 (p)
 + \mathcal{O} (\delta ) \right)
$$
tends to zero when $u \rightarrow \infty$.
Since $\psi_{L,K}$ is supported 
in $[0,1]$, we can restrict the sums in $s \in \mathbb{Z}$ to nonnegative values of $s$. 
Hence, the two convergences to zero just above together imply 
the convergence in probability stated in 
Proposition \ref{convergencechaospsi}.

\section{Proof of Proposition \ref{limitDickman}}
\label{prooflimitDickman}
From the definition of $\psi_{L,K}$, we have to prove 
$$\sum_{K=1}^{L-1} \mathbb{E} \left[ 
\frac{\mathds{1}_{(L-K-1)/K \leq X < (L-K)/K}}{L/K - X} \right] \underset{L \rightarrow \infty}{\longrightarrow} e^{-\gamma_E}, 
$$
and since $X$ has density $e^{-\gamma_E} \rho$,
we have to prove 
$$\sum_{K=1}^{L-1} \int_{(L-K-1)/K}^{(L-K)/K} 
\frac{\rho(x)}{L/K - x} dx  \underset{L \rightarrow \infty}{\longrightarrow} 1,$$
and then, letting $x = (L-K-t)/K$, 
$$\sum_{K=1}^{L-1} 
\int_0^1 \frac{ \rho((L-K-t)/K)}{ K+t} dt
\underset{L \rightarrow \infty}{\longrightarrow} 1.$$
Since $\rho$ is nonincreasing, 
$$\rho \left( \frac{L-(K-1+t)}{K-1 + t} \right) 
\leq \rho \left( \frac{L-(K+t)}{K-1 + t} \right) 
\leq \rho \left( \frac{L-(K+t)}{K} \right) 
\leq \rho \left( \frac{L-(K+t)}{K+t} \right) $$
and then, letting $u = K-1+t$ for the 
lower bound and $u = K+t$ for the upper bound, 
$$\int_0^{L-1} \frac{\rho ((L - u)/u)}{u+1}du \leq \sum_{K=1}^{L-1} 
\int_0^1 \frac{ \rho((L-K-t)/K)}{ K+t} dt
\leq \int_1^L \frac{\rho ((L - u)/u)}{u} du.$$
Letting $u = Lv$, it is then sufficient to prove: 
$$\int_{1/L}^1 \frac{\rho( (1-v)/v)}{v} dv
\underset{L \rightarrow \infty}{\longrightarrow} 1$$
and 
$$\int_{0}^{1-1/L} \frac{\rho( (1-v)/v)}{v + 1/L} dv
\underset{L \rightarrow \infty}{\longrightarrow} 1.$$
By dominated convergence, it is enough to show
$$\int_0^1 \frac{\rho( (1-v)/v)}{v} dv = 1.$$
Now, letting $x = (1-v)/v$, $v = 1/(1+x)$, 
$dv = -dx/(1+x)^2$, we get 
$$\int_0^1 \frac{\rho( (1-v)/v)}{v} dv 
= \int_0^{\infty} \frac{\rho(x)}{1 + x} dx,$$
which, by the delay differential equation satisfied by $\rho$, is equal to 
$$\int_0^{\infty} (- \rho'(x+1)) dx
= \rho(1) = 1. $$
This completes the proof of Proposition \ref{limitDickman}, which is the last step in our proof of Theorems \ref{main} and \ref{mainmultidim}.

\section{Asymptotics for moments and tails}
\label{asymptotics}
The convergence in distribution provided by Theorem \ref{main}, 
combined with a uniform bounds on moments of $c_n$ provided by \cite{SZ22}, immediately implies the following asymptotics for these moments:
\begin{cor}
    For any fixed $q \in (0,2)$, 
    $$\mathbb{E}((\log n)^{1/4}|c_n|)^{q}) \underset{n \rightarrow \infty}{\longrightarrow} 
 \pi^{-q/4}    \frac{(\pi q/2)}{\sin(\pi q/2)}.$$
\end{cor}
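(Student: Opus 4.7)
The plan is to combine the convergence in distribution from Theorem \ref{main} with a uniform integrability argument to upgrade this to convergence of $q$-th moments for every $q \in (0,2)$, and then to compute the limiting moment explicitly.

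First, by Theorem \ref{main}, $(\log n)^{1/4} |c_n|$ converges in distribution to $\sqrt{\mathcal{M}_1}|\mathcal{Z}|$, where $\mathcal{M}_1$ and $\mathcal{Z}$ are independent, $|\mathcal{Z}|^2$ is a standard exponential variable (since $\mathcal{Z}$ is a standard complex Gaussian with $\mathbb{E}[\mathcal{Z}^2]=0$), and $\mathcal{M}_1^{-1}$ is an exponential variable of mean $\sqrt{\pi}$. To pass to moments, it suffices to verify that the family $((\log n)^{q'/4}|c_n|^{q'})_{n \geq 2}$ is uniformly integrable for some $q' \in (q,2)$. This is exactly the content of the uniform upper bounds on the $q'$-th moments of $|c_n|$ obtained in \cite{SZ22}, which give $\mathbb{E}[|c_n|^{q'}] = \mathcal{O}((\log n)^{-q'/4})$ uniformly in $n$ for every $q' \in (0,2)$. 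Picking $q'$ strictly between $q$ and $2$ yields a uniform bound on $\mathbb{E}[((\log n)^{1/4}|c_n|)^{q'}]$, which implies uniform integrability of the $q$-th powers, hence
$$\mathbb{E}\left[((\log n)^{1/4}|c_n|)^q\right] \underset{n \to \infty}{\longrightarrow} \mathbb{E}\left[\mathcal{M}_1^{q/2}\right] \mathbb{E}\left[|\mathcal{Z}|^q\right].$$

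It then remains to evaluate the right-hand side. Writing $\mathcal{M}_1 = (\mathcal{E}\sqrt{\pi})^{-1}$ with $\mathcal{E}$ a standard exponential variable, one has $\mathbb{E}[\mathcal{M}_1^{q/2}] = \pi^{-q/4}\, \mathbb{E}[\mathcal{E}^{-q/2}] = \pi^{-q/4} \Gamma(1 - q/2)$, which is finite precisely because $q < 2$. On the other hand $|\mathcal{Z}|^2$ is a standard exponential, so $\mathbb{E}[|\mathcal{Z}|^q] = \mathbb{E}[(|\mathcal{Z}|^2)^{q/2}] = \Gamma(1 + q/2)$. Combining these and applying the reflection formula $\Gamma(1-q/2)\Gamma(q/2) = \pi/\sin(\pi q/2)$ together with $\Gamma(1+q/2) = (q/2)\Gamma(q/2)$, we obtain
$$\mathbb{E}[\mathcal{M}_1^{q/2}]\, \mathbb{E}[|\mathcal{Z}|^q] = \pi^{-q/4}\, \Gamma(1-q/2)\, \Gamma(1+q/2) = \pi^{-q/4}\, \frac{\pi q/2}{\sin(\pi q/2)},$$
which is the claimed limit.

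The only non-routine step is checking the uniform integrability: the convergence in distribution and the explicit moment computation are both straightforward. The constraint $q < 2$ is visible in two compatible places, namely the singularity of $\Gamma(1-q/2)$ at $q=2$ and the breakdown of the $L^{q'}$ bounds of \cite{SZ22} near $q'=2$; this is the reflection of the fact that $|c_n|$ only lies in $L^2$ at the critical parameter in the weakest possible sense, and a more delicate analysis (not pursued here) would be required to reach the endpoint $q=2$.
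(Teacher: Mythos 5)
Your proof follows essentially the same route as the paper: convergence in distribution from Theorem \ref{main}, uniform integrability via the moment bounds of \cite{SZ22}, and then the explicit computation using the exponential representations of $\mathcal{M}_1$ and $|\mathcal{Z}|^2$ together with the reflection formula. The only cosmetic difference is that you spell out the standard step of picking an exponent $q' \in (q,2)$ to deduce uniform integrability of the $q$-th powers from boundedness in $L^{q'}$, whereas the paper states the uniform integrability directly as a consequence of Theorem 2.1 of \cite{SZ22}; the substance is identical.
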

\begin{proof}
    By Theorem 2.1 of \cite{SZ22}, the sequence
    $$\left\{((\log n)^{1/4}|c_n|)^{q}\right\}_{n \geq 2}$$
    is uniformly integrable for $q \in (0,2)$. Hence, the convergence in distribution provided by Theorem \ref{main} implies the convergence 
    of the $q$-th moment. 
   Keeping the notation of Theorem \ref{main}, we deduce 
    $$\mathbb{E}[ ((\log n)^{1/4} |c_n|)^q] \underset{n \rightarrow \infty}{\longrightarrow}
    \mathbb{E}[ \mathcal{M}_1^{q/2} |\mathcal{Z}|^{q}] 
    = \mathbb{E}[ (\sqrt{\pi} \mathcal{E}_1)^{-q/2} \mathcal{E}_2^{q/2}],$$
    where $\mathcal{E}_1$ and $\mathcal{E}_2$ are two independent standard exponential variables. Now, 
    $$\mathbb{E} [ \mathcal{E}_1^{-q/2} \mathcal{E}_2^{q/2} ]  
    = \Gamma (1 - q/2) \Gamma(1 + q/2)
    = (q/2) \Gamma(q/2) \Gamma(1-q/2),$$
  and then by Euler's reflection formula, 
  $$\mathbb{E} [ \mathcal{E}_1^{-q/2} \mathcal{E}_2^{q/2} ]  = \frac{\pi q/2}{ \sin (\pi q/2)}.$$
\end{proof}
The convergence, stated in Theorem \ref{main}, of 
of $\sqrt{\pi} |c_n|^2 (\log n)^{1/2}$
towards a random variable of density
$x \mapsto (1+x)^2$ immediately implies
the following aymptotics on tails 
of $|c_n|$: 
\begin{cor}
For fixed $y \geq 0$, 
$$\mathbb{P} \left( (\log n)^{1/4} |c_n| \geq y \right) \underset{n \rightarrow \infty}{\longrightarrow} 
\frac{1}{1 + y^2 \sqrt{\pi}}. $$
\end{cor}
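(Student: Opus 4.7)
The plan is to read this corollary as an immediate corollary of Theorem \ref{main}. The theorem gives that $\sqrt{\pi}|c_n|^{2}(\log n)^{1/2}$ converges in distribution to a random variable $W$ with density $x \mapsto (1+x)^{-2}$ on $[0,\infty)$. Since $(\log n)^{1/4}|c_n| \geq y$ is equivalent to $\sqrt{\pi}|c_n|^{2}(\log n)^{1/2} \geq y^{2}\sqrt{\pi}$, the probability we want is $\mathbb{P}(\sqrt{\pi}|c_n|^{2}(\log n)^{1/2} \geq y^{2}\sqrt{\pi})$.

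First I would observe that the limiting distribution has a continuous (in fact smooth) density on $[0,\infty)$, hence its cumulative distribution function is continuous everywhere. Therefore the Portmanteau theorem guarantees convergence of $\mathbb{P}(\sqrt{\pi}|c_n|^{2}(\log n)^{1/2} \geq t)$ to $\mathbb{P}(W \geq t)$ for every $t \geq 0$, including $t = y^{2}\sqrt{\pi}$.

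Then I would compute the limiting tail directly:
$$\mathbb{P}(W \geq y^{2}\sqrt{\pi}) = \int_{y^{2}\sqrt{\pi}}^{\infty} \frac{dx}{(1+x)^{2}} = \frac{1}{1+y^{2}\sqrt{\pi}},$$
which gives exactly the claimed limit.

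There is essentially no obstacle here: once Theorem \ref{main} is in hand, the corollary reduces to applying the Portmanteau theorem at a continuity point of the limit CDF and evaluating an elementary integral. The only tiny subtlety worth mentioning is that the density $x \mapsto (1+x)^{-2}$ is supported on $[0,\infty)$, so the statement is trivially true (both sides equal $1$) at $y=0$, and for $y>0$ the argument above applies verbatim.
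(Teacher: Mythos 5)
Your proof is correct and matches the paper's intent: the paper gives no proof at all, stating only that the convergence in distribution of $\sqrt{\pi}|c_n|^2(\log n)^{1/2}$ "immediately implies" the tail asymptotic, and your argument supplies exactly the routine details (Portmanteau at a continuity point of the limiting CDF, plus the elementary integral $\int_{y^2\sqrt\pi}^\infty (1+x)^{-2}\,dx = (1+y^2\sqrt\pi)^{-1}$) that justify that word.
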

This exact asymptotics given here is available for fixed $y$ and $n \rightarrow \infty$. 
It is much more difficult to get 
bounds which are uniform in $n$ and $y$. 
Form \cite{SZ22}, one gets the following upper bound, which is sharp up to a logarithmic factor: 
\begin{proposition}
Uniformly in $y \geq 2$ and $n \geq 1$, 
$$\mathbb{P} \left( (\log n)^{1/4} |c_n| \geq y \right) = \mathcal{O} \left( \frac{\min( \sqrt{\log n}, \log y)}{y^2} \right).$$
\end{proposition}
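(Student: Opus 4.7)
My plan is to establish the two bounds $\mathbb{P}((\log n)^{1/4}|c_n|\ge y) \ll \sqrt{\log n}/y^2$ and $\mathbb{P}((\log n)^{1/4}|c_n|\ge y) \ll \log y/y^2$ separately; the statement is their minimum.

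The first bound is immediate from Markov's inequality applied to $|c_n|^2$, combined with the identity $\mathbb{E}[|c_n|^2]=1$ (which is equation (2.8) of \cite{NPS23} specialized to $\theta=1$, $q=1$). This already handles the regime $\sqrt{\log n}\le \log y$.

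For the second, and harder, bound I would exploit the sharp moment estimates of \cite{SZ22}. The heuristic for the $\log y$ factor comes from Theorem \ref{main}: the limit variable $\mathcal{M}_1^{1/2}\mathcal{Z}$ has $q$-th moment $\mathbb{E}[\mathcal{M}_1^{q/2}|\mathcal{Z}|^q]\asymp (2-q)^{-1}$ as $q\uparrow 2$, since $\mathcal{M}_1=(\sqrt{\pi}\mathcal{E})^{-1}$ and $\mathbb{E}[\mathcal{E}^{-1+\varepsilon/2}]=\Gamma(\varepsilon/2)\sim 2/\varepsilon$. Using the quantitative moment control in \cite{SZ22}---the same input that underlies the uniform integrability invoked in the moments corollary above---this should promote to the uniform estimate
$$\mathbb{E}\bigl[((\log n)^{1/4}|c_n|)^{2-\varepsilon}\bigr] \ll \varepsilon^{-1}$$
for $n\ge 2$ and $\varepsilon\in(0,1]$. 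Granting this, Markov's inequality applied to $((\log n)^{1/4}|c_n|)^{2-\varepsilon}$ gives
$$\mathbb{P}\bigl((\log n)^{1/4}|c_n|\ge y\bigr)\le \frac{\mathbb{E}[((\log n)^{1/4}|c_n|)^{2-\varepsilon}]}{y^{2-\varepsilon}}\ll \frac{y^{\varepsilon}}{\varepsilon\,y^{2}}.$$
Optimizing by taking $\varepsilon=1/\log y$ makes $y^{\varepsilon}=e$ and $\varepsilon^{-1}=\log y$, yielding the $\log y/y^2$ bound for $y\ge e$; the residual range $y\in[2,e)$ is trivial since probabilities are bounded by $1$.

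The main obstacle will be extracting the sharp $\varepsilon^{-1}$ dependence of the moment bound, uniformly in $n$. Uniform integrability for each fixed $q<2$ already suffices for the qualitative moment convergence in the preceding corollary, but pinning down the blow-up rate as $q\uparrow 2$ is more delicate and is precisely the quantitative input that \cite{SZ22} supplies. Once that moment bound is in hand, the remainder is a routine Markov-with-optimized-exponent calculation.
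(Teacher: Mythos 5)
Your plan is correct and follows essentially the same route as the paper: Markov's inequality with a fractional exponent close to $2$, combined with the moment bound of \cite{SZ22} (their Theorem 2.1, which in the paper's form reads $\mathbb{E}[|c_n|^{2q}] \ll (1+(1-q)\sqrt{\log n})^{-q}$), optimized at $\varepsilon = 2(1-q) = 1/\log y$ — the same choice $q = 1 - 1/(2\log y)$ the paper makes. The only presentational difference is that you split the argument into two separate Markov applications (one at $q=1$ for the $\sqrt{\log n}/y^2$ regime, one at $q<1$ for the $\log y/y^2$ regime), whereas the paper runs a single calculation at $q = 1 - 1/(2\log y)$ and reads off $\min(\sqrt{\log n}, 2\log y)/y^2$ directly; both organizations work. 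The one step you defer — the uniform moment estimate $\mathbb{E}[((\log n)^{1/4}|c_n|)^{2-\varepsilon}] \ll \varepsilon^{-1}$ — does indeed follow from the \cite{SZ22} bound via $(\log n)^{q/2}/(1+(1-q)\sqrt{\log n})^q \leq (1-q)^{-q} \leq (1-q)^{-1}$, so this is not a gap in substance, only in write-up; you should include that one-line verification rather than merely asserting it "should promote."
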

\begin{proof}
For $n = 1$, the two sides of the estimate vanish since $\log n = 0$, so we can assume $n \geq 2$. 
  We use Markov's inequality, and Theorem 2.1 of \cite{SZ22}. We get, for $0 \leq q \leq 1$, 

    $$\mathbb{P}\left((\log n)^{1/4}|c_n| \geq y \right) \leq \frac{(\log n)^{q/2}\mathbb{E}(|c_n|^{2q})}{y^{2q}} \ll \frac{(\log n)^{q/2}}{y^{2q}(1+(1-q)\sqrt{\log n})^q}.$$
    We choose $q=1-1/(2 \log y)$, and deduce an upper bound dominated by
    $$\frac{1}{y^2}\frac{y^{1/\log y}(\log n)^{1/2-\frac{1}{4\log y}}}{\left(1+\frac{\sqrt{\log(n)}}{2\log y}\right)^{1-\frac{1}{2\log y}}}
    = \frac{e}{y^2 \left( \frac{1}{\sqrt{\log n}} + \frac{1}{2 \log y} \right)^{1 - \frac{1}{2 \log y}}}. $$
    For $n \geq 2$, $y \geq 2$, we have 
    $$\left(\frac{1}{\sqrt{\log n}} + \frac{1}{2 \log y} \right)^{\frac{1}{2 \log y}} \leq 
    \left(1 + \frac{1}{ \sqrt{\log 2}}+\frac{1}{2 \log 2} \right)^{\frac{1}{2 \log 2}}
    \leq 10,
    $$
    which gives an upper bound dominated by 
    $$\frac{1}{ y^2 \left(\frac{1}{\sqrt{\log n}} + \frac{1}{2 \log y} \right)} \leq \frac{1}{y^2 \max(1/\sqrt{\log n}, 1/(2 \log y))} 
    \leq \frac{\min(\sqrt{\log n}, 2\log y)}{y^2}.$$
    \end{proof}

It is much more difficult to obtain lower bounds for the tails which are uniform in $n$ and $y$. Notice that for fixed $n \geq 1$, $c_n$ is a linear combination of product of a bounded number of complex Gaussian variables, and then $(\log n)^{1/4} |c_n|$ has 
tails decaying faster than polynomially. 
In particular, it is not possible to have 
$$\mathbb{P} \left((\log n)^{1/4} |c_n| \geq y\right) \gg 1/y^2$$
uniformly in $n \geq 2$ and $y \geq 2$. 

\section*{Acknowledgments}
We thank Elliot Paquette and Nick Simm for helpful discussions on the holomorphic multiplicative chaos following our previous work on the subcritical regime. In particular, the reasoning providing the constant $\sqrt{\pi}$ in Theorem \ref{main} has been obtained from computations initially written by Nick Simm. 
We would also like to thank Seth Hardy and  Oleksiy Klurman for their interest in this paper and for their comments on the sums of random multiplicative functions analogue for the tail asymptotics. The first author is supported by the Heilbronn Institute for Mathematical Research.

\color{black}

\bibliographystyle{habbrv}
\bibliography{biblio}

\end{document}